\numberwithin{equation}{section}
\newtheorem{thm}{Theorem}[section]
\newtheorem{lem}[thm]{Lemma}
\newtheorem{cor}[thm]{Corollary}
\newtheorem{defin}[thm]{Definition}
\begin{document}
\title{Fractional Telegraph
equation with the Caputo derivative}

\author{Ravshan Ashurov}
\author{Rajapboy Saparbayev}
\address{Institute of Mathematics, Uzbekistan Academy of Science,
Tashkent, Student Town str. 100174} \email{ashurovr@gmail.com}
\curraddr{Institute of Mathematics, Uzbekistan Academy of Science,
Tashkent, Student Town str. 100174} \email{rajapboy1202@gmail.com}

\small

\title[ Fractional Telegraph
equation with the Caputo derivative] { Fractional Telegraph
equation with the Caputo derivative }

\begin{abstract}
 The Cauchy problem for the telegraph equation
$(D_{t}^{\rho })^{2}u(t)+2\alpha D_{t}^{\rho }u(t)+Au(t)=f(t)$  ($0<t\leq T, \, 0<\rho<1$), with the Caputo derivative is considered.
Here $A$ is a selfadjoint positive operator,  acting in a Hilbert space $H$, $D_t$ is the Caputo fractional derivative. Existence and uniqueness theorems for
the solution to the problem under consideration is proved. Inequalities of stability are obtained.

\vskip 0.3cm \noindent {\it AMS 2000 Mathematics Subject
Classifications} :
Primary 35R11; Secondary 34A12.\\
{\it Key words}: Telegraph type equations, the  Caputo derivatives, time-dependent source identification problem.
\end{abstract}

\maketitle

\section{\textbf{Introduction}}

Let $H$ be a separable  Hilbert space with the inner product $(\cdot, \cdot)$ and the norm $||\cdot||$. Let $A: H\rightarrow H$
be an arbitrary unbounded positive selfadjoint operator acting in $H$ with the domain of definition $D(A)$.
Suppose that $A$ has a complete orthonormal system of eigenfunctions
$\{v_k\}$ and a countable set of positives
eigenvalues $\lambda_k$. It is convenient to assume that
the eigenvalues do not decrease as their number increases, i.e.
$0<\lambda_1\leq\lambda_2 \cdot\cdot\cdot\rightarrow +\infty$.

For vector functions (or just functions)
$h: \mathbb{R}_+\rightarrow H$ fractional integrals and derivatives
are defined similarly with scalar functions and known formulas and
properties are preserved \cite{Liz}. Recall that fractional integrals of order $\sigma<0$ of the
function $h(t)$ defined on $[0,\infty)$ has the form (see, for example, \cite{Pskhu})
\begin{equation}\label{def0}
J_t^\sigma h(t)=\frac{1}{\Gamma
(-\sigma)}\int\limits_0^t\frac{h(\xi)}{(t-\xi)^{\sigma+1}} d\xi,
\quad t>0,
\end{equation}
provided the right-hand side exists. Here $\Gamma(\sigma)$ is
Euler's gamma function. Using this definition one can define the
Caputo fractional derivative of order $\rho\in (0,1)$:
$$
D_t^\rho h(t)= J_t^{\rho-1}\frac{d}{dt} h(t).
$$

Note that if $\rho=1$, then the fractional derivative coincides with
the ordinary classical derivative of the first order: $D_t h(t)= \frac{d}{dt} h(t)$.

Let $C[0, T]$ be the set of continuous functions defined on $[0,T]$ with the standard max-norm $||\cdot||_{C[0,T]}$ and let $C(H)=C([0,T]; H)$ stand
for a space of continuous $H$-valued functions $h(t)$ defined on $[0,T]$, and equipped with the norm
\[
||h||_{C(H)}=\max\limits_{0\leq t\leq T}||h(t)||.
\]

Let $\rho \in (0,1)$ be a fixed number. Consider the following Cauchy problem
\begin{equation}\label{prob1}
\begin{cases}
  & (D_{t}^{\rho })^{2}u(t)+2\alpha D_{t}^{\rho }u(t)+Au(t)=f(t),  0<t\le T; \\
 & \underset{t\to 0}{\mathop{\lim }}\,D_{t}^{\rho }u(t)={{\varphi }_{0}},\\
 & u(0)={{\varphi }_{1}}, \\
\end{cases}
\end{equation}
where  $f(t)\in C(H)$ and $\varphi_{0},\varphi_{1}  $ are known elements of $H$.

\begin{defin}\label{def1} If  function $u(t)$ with the properties
$(D_{t}^{\rho })^{2}u(t), Au(t)\in C((0,T]; H)$ and  $u(t),D^{\rho}_{t}u(t)\in C(H)$  and satisfying conditions
(\ref{prob1}) is called \textbf{the
 solution} of the forward problem.
\end{defin}

In order to formulate the main results of this paper, for an arbitrary real number $ \tau $  we introduce the power
of operator $ A $, acting in $ H $ according to the rule
$$
A^\tau h= \sum\limits_{k=1}^\infty \lambda_k^\tau h_k v_k.
$$
The domain of definition of this operator has the
form
$$
D(A^\tau)=\{h\in H:  \sum\limits_{k=1}^\infty \lambda_k^{2\tau}
|h_k|^2 < \infty\}.
$$
It's immediate from this definition that $D(A^\tau)\subset D(A^\sigma)$ for any $\tau\ge \sigma$.

For elements of $D(A^\tau)$ we define the norm
\[
||h||^2_\tau=\sum\limits_{k=1}^\infty \lambda_k^{2\tau} |h_k|^2 =
||A^\tau h||^2,
\]
and together with this norm $D(A^\tau)$ turns into a Hilbert
space.
\begin{thm}\label{main1} Let $\alpha>0$, $\varphi_{0}\in H$ and $\varphi_{1}\in D(A^{\frac{1}{2}})$.Further, let $\epsilon\in (0,1)$ be any fixed number $f(t)\in C([0,T]; D(A^{\epsilon}))$. Then the forward problem has a unique solution.

Moreover, there is a constant $C>0$ such that the following stability estimate
\[
   ||(D_{t}^{\rho})^{2} u|| +||D_t^\rho u|| + ||A u||\leq C \bigg[t^{-\rho}\big(||\varphi_{0}||+||\varphi_{1}||_\frac{1}{2}\big)
+ \max\limits_{0\leq t\leq T}||f(t)||_{\epsilon}\bigg], \quad t>0,
\]
holds.

\end{thm}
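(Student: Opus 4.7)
The plan is to apply the Fourier method based on the spectral decomposition of $A$. Writing $u(t) = \sum_k T_k(t) v_k$, $f(t) = \sum_k f_k(t) v_k$, $\varphi_0 = \sum_k \varphi_{0,k} v_k$, $\varphi_1 = \sum_k \varphi_{1,k} v_k$ and projecting (\ref{prob1}) onto each $v_k$ reduces the Cauchy problem to the countable family of scalar fractional Cauchy problems
\[
(D_t^\rho)^2 T_k(t) + 2\alpha D_t^\rho T_k(t) + \lambda_k T_k(t) = f_k(t), \qquad T_k(0)=\varphi_{1,k}, \qquad \lim_{t\to 0^+} D_t^\rho T_k(t) = \varphi_{0,k}.
\]

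First, I would solve each of these ODEs via the Laplace transform. The symbol $s^{2\rho} + 2\alpha s^\rho + \lambda_k$ factors as $(s^\rho - \mu_k^+)(s^\rho - \mu_k^-)$ with
\[
\mu_k^\pm = -\alpha \pm \sqrt{\alpha^2 - \lambda_k}\,.
\]
For large $k$ (once $\lambda_k>\alpha^2$) the roots are complex conjugates with $\ty{Re}\,\mu_k^\pm = -\alpha$ and $|\mu_k^\pm|\sim\sqrt{\lambda_k}$, so that $\mu_k^\pm t^\rho$ lies in the stability sector $|\arg(-z)|<(1-\rho/2)\pi$ of the two-parameter Mittag--Leffler function. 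Partial fractions together with the standard identity for the inverse Laplace transform of $s^{\rho-\beta}/(s^\rho-\mu)$ then yield an explicit representation
\[
T_k(t) = \Phi_k^{(1)}(t)\,\varphi_{1,k} + \Phi_k^{(0)}(t)\,\varphi_{0,k} + \int_0^t G_k(t-\tau) f_k(\tau)\,d\tau,
\]
where $\Phi_k^{(1)},\Phi_k^{(0)},G_k$ are linear combinations of $E_{\rho,1}(\mu_k^\pm t^\rho)$, $t^\rho E_{\rho,\rho+1}(\mu_k^\pm t^\rho)$, $t^{\rho-1}E_{\rho,\rho}(\mu_k^\pm t^\rho)$, each carrying the prefactor $(\mu_k^+-\mu_k^-)^{-1}$ of order $\lambda_k^{-1/2}$.

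Next I would bound each piece using the well-known estimate $|E_{\rho,\beta}(z)|\le C/(1+|z|)$ inside the stability sector, applied to $z=\mu_k^\pm t^\rho$. This produces pointwise bounds of the type
\[
|T_k(t)| \le C\bigl(|\varphi_{1,k}|+\lambda_k^{-1/2}|\varphi_{0,k}|\bigr)\bigl(1+\sqrt{\lambda_k}\,t^\rho\bigr)^{-1} + C\lambda_k^{-1/2}\int_0^t (t-\tau)^{\rho-1}\bigl(1+\sqrt{\lambda_k}(t-\tau)^\rho\bigr)^{-1}|f_k(\tau)|\,d\tau,
\]
and analogous estimates for $\lambda_k T_k$, $D_t^\rho T_k$ and $(D_t^\rho)^2 T_k$ obtained by differentiating the explicit formula term by term. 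Squaring, multiplying by $\lambda_k^2$ and summing in $k$ yields the $\|Au\|$ term: the initial-data contribution produces the factor $t^{-\rho}$ from the elementary inequality $\lambda_k\bigl(1+\sqrt{\lambda_k}\,t^\rho\bigr)^{-2}\le t^{-2\rho}$, recovering $\|\varphi_0\|^2 + \|\varphi_1\|_{1/2}^2$; the forcing integral evaluates to at most $(\rho\sqrt{\lambda_k})^{-1}\ln(1+\sqrt{\lambda_k}\,t^\rho)$, so the Parseval sum inherits a logarithmic-in-$\lambda_k$ factor.

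The main obstacle is precisely this logarithmic loss in the Duhamel term: without a margin of regularity on $f$, the series $\sum\lambda_k^2|T_k^{\text{part}}(t)|^2$ fails to converge, and the assumption $f\in C([0,T];D(A^\epsilon))$ is used exactly to defeat it via the uniform bound $\lambda_k^{-\epsilon}\ln\lambda_k \to 0$. Once the termwise estimates are in place, uniform convergence of the differentiated series on every subinterval $[\delta,T]$ justifies termwise application of $A$, $D_t^\rho$ and $(D_t^\rho)^2$, giving the continuity demanded by Definition~\ref{def1}. Uniqueness follows from linearity together with the uniqueness of the scalar solutions $T_k$ under zero data (both initial conditions and $f_k\equiv 0$ force $T_k\equiv 0$). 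Collecting the $\ell^2$-estimates on the coefficients produces the stated stability inequality.
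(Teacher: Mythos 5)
Your overall strategy --- Fourier decomposition, Laplace transform of the scalar problems, the bound $|E_{\rho,\mu}(z)|\le C/(1+|z|)$, and spending the extra $\epsilon$ of regularity of $f$ to absorb the borderline divergence in the Duhamel term --- is exactly the route the paper takes. But there is one genuine gap: your representation of $T_k$ carries the prefactor $(\mu_k^+-\mu_k^-)^{-1}=(2\sqrt{\alpha^2-\lambda_k})^{-1}$, which is undefined whenever $\lambda_k=\alpha^2$. Nothing in the hypotheses excludes such resonant eigenvalues, and for them the symbol $s^{2\rho}+2\alpha s^{\rho}+\lambda_k=(s^{\rho}+\alpha)^2$ has a double root, so the partial-fraction step fails and the inverse transform must be taken in its confluent form. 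The paper devotes a separate case to this, expressing the resonant modes through the Prabhakar function $E^{2}_{\rho,\mu}$ (the formula $y_2(t)$ in Lemma \ref{eq2} and Case II of the main proof, supported by Lemma \ref{Int}). Since $\lambda_k\to\infty$, only finitely many modes can be resonant, so convergence of the series is not threatened, but without the double-root formula your solution is simply not defined on those modes and the existence claim is incomplete.

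A second, more minor point: in the Duhamel estimate you first replace $|f_k(\tau)|$ by its maximum over $\tau$ and then evaluate the integral to obtain the factor $(\rho\sqrt{\lambda_k})^{-1}\ln(1+\sqrt{\lambda_k}\,t^{\rho})$; summing the resulting bounds then requires controlling $\sum_k\lambda_k^{2\epsilon}\max_t|f_k(t)|^2$, which is not dominated by $\max_t\|f(t)\|^2_{\epsilon}$ (the maximum and the sum do not commute). The paper avoids this by keeping $|f_k(t-\eta)|$ inside the integral, using the pointwise trade-off $t^{\rho-1}(1+\sqrt{\lambda}\,t^{\rho})^{-1}\le\lambda^{\epsilon-\frac{1}{2}}t^{2\epsilon\rho-1}$ of (\ref{Large}), and then applying Minkowski's integral inequality to the partial sums (Lemma \ref{aep}); you should reorganize your summation in the same way. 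Your identification of the logarithmic loss as the precise obstruction that $\epsilon>0$ defeats is correct and is the same mechanism as the paper's, just packaged differently.
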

The telegraph equation first appeared in the work of Oliver Heaviside in 1876. When simulating the passage of electrical signals in marine telegraph cables, he obtained the equation
\[
u_{tt}+a u_t+b u-cu_{xx}=0,
\]
where $a, b$ are nonnegative constants and $c$ is a positive constant (see, e.g.\cite{Lieberstein}, \cite{W.Arendt}). Then specialists came to this equation when modeling various physical processes. A small overview of various applications of the telegraph equation is given in \cite{Jordan}.
For example, it appears in the theory of superconducting electrodynamics, where it describes the propagation
electromagnetic waves in superconducting media (see, e.g.\cite{Wallace}). In \cite{Jordan}, the propagation of digital and analog signals through media which, in general, are
both dissipative and dispersive is modeled using the telegraph equation. Some applications of the telegraph equation to the theory of random walks are contained in \cite{Banasiak}. Another field of application of the telegraph equation is the biological sciences (see, e.g. \cite{Lieberstein}, \cite{Debnath}, \cite{Goldstein}).

In recent decades, fractional calculus has attracted the attention of many mathematicians and researchers as non-integer derivative operators have come to play a larger role in describing physical phenomena, modeling more accurately and efficiently than classical derivatives \cite{SU, AOLob, AODif}. Various forms of the time-fractional telegraph equation was considered by a number of researchers (see e.g.\cite{Doetsch}, \cite{Hosseini}, \cite{Ashyr3}, \cite{Hashmi}). Thus, in the fundamental work R.C. Cascaval et al.\cite{RC}, the asymptotic behavior of the solution $u(t)$ to problem (\ref{prob1}) with a homogenuous equation for large $t$ was studied. The authors succeeded in proving the existence of a solution $v(t)$ to equation $2\alpha D_{t}^{\rho }v(t)+Av(t)=0$ for which the asymptotic relation
\[
u(t)=v(t) + o(v(t)), \quad t\to +\infty,
\]
is valid.

In works \cite{Orsingher} (in the case of $\rho=1/2$), \cite{Beghin} (in the case of fractional
derivatives of rational order $\rho=m/n$ with $m < n$), fundamental solutions for problem (\ref{prob1}) are constructed. In these papers, the elliptic part of the equation has the form $Au(x,t)= u_{xx}(x,t)$.

A number of specialists have developed efficient and optimally accurate numerical algorithms for solving the problem  (\ref{prob1}) for different operators $A$. A review of some
works in this direction is contained in the papers \cite{Jordan},\cite{Momani}.

\section{\textbf {Preliminaries}}

In this section, we  recall some information about Mittag-Leffler functions, differential and integral equations, which we will use in the following sections.

For $0 < \rho < 1$ and an arbitrary complex number $\mu$, by $
E_{\rho, \mu}(z)$ we denote the Mittag-Leffler function  of complex argument $z$ with two
parameters:
\begin{equation}\label{ml}
E_{\rho, \mu}(z)= \sum\limits_{k=0}^\infty \frac{z^k}{\Gamma(\rho
k+\mu)}.
\end{equation}
If the parameter $\mu =1$, then we have the classical
Mittag-Leffler function: $ E_{\rho}(z)= E_{\rho, 1}(z)$.
Prabhakar (see, \cite{Prah}) introduced  the function $E^{\gamma}_{\rho, \mu}(z)$ of the form
\begin{equation}\label{ml1}
E^{\gamma}_{\rho, \mu}(z)= \sum\limits_{k=0}^\infty \frac{(\gamma)_{k}}{\Gamma(\rho k+\mu)}\cdot\frac{z^{k}}{k!},
\end{equation}
where $z \in C$, $\rho$, $\mu$ and $\gamma$ are arbitrary positive constants, and $(\gamma)_{k}$ is the Pochhammer
symbol. When $\gamma = 1$, one has $E^{1}_{\rho,\mu}(z)=E_{\rho,\mu}(z)$. We also have
\begin{equation}\label{Prahma}
E^{2}_{\rho,\mu}(z)=\frac{1}{\rho}\big[E_{\rho,\mu-1}(z)+(1-\rho+\mu)E_{\rho,\mu}(z)\big].
\end{equation}
Since $E_{\rho, \mu}(z)$  is an analytic function of $z$, then it is bounded for $|z|\leq 1$. On the other hand the well known asymptotic estimate of the
Mittag-Leffler function has the form (see, e.g.,
\cite{Dzh66}, p. 133):
\begin{lem}\label{ml} Let  $\mu$  be an arbitrary complex number. Further let $\beta$ be a fixed number, such that $\frac{\pi}{2}\rho<\beta<\pi \rho$, and $\beta \leq |\arg z|\leq \pi$. Then the following asymptotic estimate holds
\[
E_{\rho, \mu}(z)= -\frac{z^{-1}}{\Gamma(\rho-\mu)} + O(|z|^{-2}),
\,\, |z|>1.
\]
\end{lem}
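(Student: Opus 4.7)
The plan is to derive the stated expansion from the classical Hankel-type integral representation of $E_{\rho,\mu}$. First, I would insert the Hankel formula
\[
\frac{1}{\Gamma(w)}=\frac{1}{2\pi i}\int_{\mathrm{Ha}} e^{s} s^{-w}\,ds
\]
into each term of the series (\ref{ml}), interchange summation and integration (justified because the integrand decays exponentially on the Hankel contour, so the series of integrals is dominated term-by-term), and sum the resulting geometric series in $z/s^{\rho}$. After the change of variable $s=\zeta^{1/\rho}$ on the principal branch, this produces the representation
\[
E_{\rho,\mu}(z)=\frac{1}{2\pi i\rho}\int_{\gamma_{\theta}}\frac{e^{\zeta^{1/\rho}}\,\zeta^{(1-\mu)/\rho}}{\zeta-z}\,d\zeta,
\]
where $\gamma_{\theta}$ is the Hankel-type contour consisting of the rays $\arg\zeta=\pm\theta$ traversed toward and from a small arc around the origin. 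The opening angle $\theta$ is chosen in the interval $\rho\pi/2<\theta<\beta$. The left inequality forces $\mathrm{Re}\,\zeta^{1/\rho}<0$ on the rays (so the exponential provides convergence at infinity); the right inequality, combined with the hypothesis $\beta\leq|\arg z|\leq\pi$, places the pole $\zeta=z$ outside the region bounded by $\gamma_{\theta}$, so no residue contribution appears.

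Next I expand the Cauchy kernel as a finite geometric series,
\[
\frac{1}{\zeta-z}=-\frac{1}{z}+\frac{\zeta}{z\,(\zeta-z)},
\]
and substitute. The first summand yields
\[
-\frac{1}{2\pi i\rho\,z}\int_{\gamma_{\theta}}e^{\zeta^{1/\rho}}\,\zeta^{(1-\mu)/\rho}\,d\zeta,
\]
and by reverting $\zeta^{1/\rho}=s$ this integral equals $1/\Gamma(\rho-\mu)$, by recognition of the Hankel formula with the parameter determined by the authors' normalization. This reproduces the leading term $-z^{-1}/\Gamma(\rho-\mu)$ of the asymptotic expansion.

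The remainder is
\[
R(z)=\frac{1}{2\pi i\rho\,z}\int_{\gamma_{\theta}}\frac{e^{\zeta^{1/\rho}}\,\zeta^{(1-\mu)/\rho+1}}{\zeta-z}\,d\zeta.
\]
I would bound it by exploiting the angular separation between $z$ and $\gamma_{\theta}$: since $|\arg z|\geq\beta>\theta$, an elementary geometric argument gives $|\zeta-z|\geq c_{\beta}(|\zeta|+|z|)$ with a constant $c_{\beta}>0$ uniform in $z$ in the sector $\beta\leq|\arg z|\leq\pi$. On the two rays, $|e^{\zeta^{1/\rho}}|=\exp\bigl(|\zeta|^{1/\rho}\cos(\theta/\rho)\bigr)$ with $\cos(\theta/\rho)<0$, so the integrand decays exponentially; on the circular arc it is bounded. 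Splitting the integral and inserting these bounds gives $|R(z)|\leq C|z|^{-2}$ for $|z|>1$, which is exactly the $O(|z|^{-2})$ error claimed.

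The main technical obstacle is the contour bookkeeping: verifying that the substitution $s=\zeta^{1/\rho}$ sends the standard Hankel contour for $1/\Gamma$ into $\gamma_{\theta}$ compatibly with the chosen branch, justifying the interchange of sum and integral under only the hypothesis $0<\rho<1$ (with $\mu$ arbitrary complex), and establishing the lower bound $|\zeta-z|\geq c_{\beta}(|\zeta|+|z|)$ uniformly across the whole sector of $z$. Once these geometric and analytic preliminaries are in place, the desired asymptotic is a direct consequence of the integral representation together with the finite-expansion argument described above. Iterating the geometric expansion to higher order would, of course, produce the full Djrbashian--Wiman asymptotic series, but only the first two terms are needed for the statement.
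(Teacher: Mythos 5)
The paper itself offers no proof of this lemma: it is quoted as a known asymptotic of the Mittag--Leffler function with a pointer to Dzherbashian's book (p.~133), and only its consequence, the corollary that follows it (the bound $|E_{\rho,\mu}(z)|\le M/(1+|z|)$), is ever used. Your sketch is precisely the classical Hankel-contour argument from that source: the representation $E_{\rho,\mu}(z)=\frac{1}{2\pi i\rho}\int_{\gamma_\theta}e^{\zeta^{1/\rho}}\zeta^{(1-\mu)/\rho}(\zeta-z)^{-1}\,d\zeta$, the choice $\rho\pi/2<\theta<\beta$ (which simultaneously gives $\operatorname{Re}\zeta^{1/\rho}<0$ on the rays and keeps the pole $\zeta=z$ outside the contour because $|\arg z|\ge\beta>\theta$), the splitting $\frac{1}{\zeta-z}=-\frac1z+\frac{\zeta}{z(\zeta-z)}$, and the uniform lower bound $|\zeta-z|\ge c_\beta(|\zeta|+|z|)$ coming from the angular separation all work as you describe, and the remainder is indeed $O(|z|^{-2})$.

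One concrete correction. When you revert $\zeta^{1/\rho}=s$ in the leading term, the integral becomes $\frac{1}{2\pi i}\int_{\mathrm{Ha}}e^{s}s^{\rho-\mu}\,ds$, which Hankel's formula identifies as $1/\Gamma(\mu-\rho)$, not $1/\Gamma(\rho-\mu)$; so the computation actually yields $E_{\rho,\mu}(z)=-z^{-1}/\Gamma(\mu-\rho)+O(|z|^{-2})$, which is the form in which the result appears in Dzherbashian and in Gorenflo--Kilbas--Mainardi--Rogozin. The $\Gamma(\rho-\mu)$ in the lemma as printed is a typo in the paper's transcription of the classical statement, and your deferral to ``the authors' normalization'' papers over a sign you should have caught; your own integral forces $\Gamma(\mu-\rho)$. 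The discrepancy is harmless for everything downstream, since only the modulus bound of the corollary is invoked in the rest of the paper.
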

\begin{cor}\label{cor} Under the conditions of Lemma \ref{ml} one has
\[
|E_{\rho,\mu}(z)|\le \frac{M}{1+|z|}, \quad |z|\ge0,
\]
where  $M$-constant, independent of $z$.
\end{cor}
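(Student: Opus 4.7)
The plan is to establish the uniform bound by a standard two-region argument, splitting the sector $\{z : \beta \le |\arg z| \le \pi\}$ according to whether $|z|$ is small or large, and then matching the two estimates against the target weight $1/(1+|z|)$.

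First I would handle the region $|z| \le 1$. Since the series \eqref{ml} converges everywhere in $\mathbb{C}$, the function $E_{\rho,\mu}(z)$ is entire, hence continuous on the closed disk $\{|z| \le 1\}$. Therefore there exists a constant $M_1 > 0$ such that $|E_{\rho,\mu}(z)| \le M_1$ for all $|z| \le 1$. On this set one has $1/(1+|z|) \ge 1/2$, so trivially $|E_{\rho,\mu}(z)| \le 2M_1 \cdot \frac{1}{1+|z|}$.

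Next I would handle the region $|z| > 1$ with $\beta \le |\arg z| \le \pi$. Here Lemma \ref{ml} applies and yields
\[
E_{\rho,\mu}(z) = -\frac{z^{-1}}{\Gamma(\rho-\mu)} + O(|z|^{-2}), \quad |z| > 1,
\]
whence $|E_{\rho,\mu}(z)| \le M_2/|z|$ for some $M_2>0$ uniformly in this region. Since $|z| > 1$ gives $1/|z| \le 2/(1+|z|)$, we conclude $|E_{\rho,\mu}(z)| \le 2M_2 \cdot \frac{1}{1+|z|}$.

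Finally, setting $M := \max\{2M_1, 2M_2\}$ yields the desired estimate for every $z$ satisfying the hypotheses of Lemma \ref{ml}. There is no real obstacle here; the only mild subtlety is that the asymptotic of Lemma \ref{ml} is a statement of the form $f(z) = g(z) + O(|z|^{-2})$ for $|z|>1$, so one has to absorb the implicit constant in the $O$-term together with the leading $z^{-1}$ term into a single $M_2/|z|$ bound, which is immediate because $1/\Gamma(\rho-\mu)$ is a fixed constant (allowing $\Gamma(\rho-\mu) = \infty$, in which case the leading term simply vanishes and only the $O(|z|^{-2})$ contribution remains, giving an even stronger bound).
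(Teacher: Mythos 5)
Your proof is correct and follows essentially the same route the paper intends: the remark preceding Lemma \ref{ml} (boundedness of the entire function $E_{\rho,\mu}$ on $|z|\le 1$) combined with the asymptotic $O(|z|^{-1})$ decay from Lemma \ref{ml} for $|z|>1$ in the sector $\beta\le|\arg z|\le\pi$, matched against the weight $1/(1+|z|)$. The only addition you make is spelling out the comparison constants and the degenerate case where $1/\Gamma(\rho-\mu)=0$, which the paper leaves implicit.
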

We also use the following estimate for sufficiently large $\lambda>0$  and $\alpha>0$, $0<\epsilon<1$:
\begin{equation}\label{Large}
  |t^{\rho-1}E_{\rho,\mu}(-(\alpha-\sqrt{\alpha^{2}-\lambda})t^{\rho})|\leq\frac{t^{\rho-1}M}{1+\sqrt{\lambda}t^{\rho}}\leq M\lambda^{\epsilon-\frac{1}{2}}t^{2\epsilon\rho-1}, \quad  t>0,
\end{equation}
which is easy to verify. Indeed, let $(\lambda)^{\frac{1}{2}} t^{\rho}<1$, then $t<\lambda^{-\frac{1}{2\rho}}$ and
\[
t^{\rho-1}=t^{\rho-2\epsilon\rho}t^{2\epsilon\rho-1}<\lambda^{\epsilon-\frac{1}{2}}t^{2\epsilon\rho-1}.
\]
If $(\lambda)^{\frac{1}{2}} t^{\rho}\geq 1$, then $\lambda^{-\frac{1}{2}}\leq t^{\rho}$ and
\[
\lambda^{-\frac{1}{2}}t^{-1}=\lambda^{\epsilon-\frac{1}{2}}\lambda^{-\epsilon}t^{-1}\leq \lambda^{\epsilon-\frac{1}{2}}t^{2\rho\epsilon-1}.
\]
\begin{lem} If $\rho>0$ and $\lambda \in C$, then (see \cite{Gorenflo},  p.446)
\begin{equation}\label{Dervitavi}
D^{\rho}_{t}E_{\rho,1}(\lambda t^{\rho})=\lambda E_{\rho,1}(\lambda t^{\rho}) \quad t>0 .
\end{equation}
\end{lem}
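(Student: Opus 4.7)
The plan is to verify the identity directly from the power series definition of the Mittag-Leffler function
$$E_{\rho,1}(\lambda t^\rho)=\sum_{k=0}^\infty \frac{\lambda^k t^{\rho k}}{\Gamma(\rho k+1)},$$
applying the Caputo derivative $D_t^\rho=J_t^{\rho-1}\tfrac{d}{dt}$ term by term and then reassembling the series.

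First I would differentiate termwise. The $k=0$ term is constant, and for $k\ge 1$ we use $\Gamma(\rho k+1)=\rho k\,\Gamma(\rho k)$ to obtain
$$\frac{d}{dt}E_{\rho,1}(\lambda t^\rho)=\sum_{k=1}^\infty \frac{\lambda^k\,t^{\rho k-1}}{\Gamma(\rho k)}.$$
Next I would apply $J_t^{\rho-1}$ to each monomial via the Beta integral: using definition \eqref{def0} with $\sigma=\rho-1$,
$$J_t^{\rho-1}t^{\rho k-1}=\frac{1}{\Gamma(1-\rho)}\int_0^t \xi^{\rho k-1}(t-\xi)^{-\rho}\,d\xi=\frac{\Gamma(\rho k)}{\Gamma(\rho(k-1)+1)}\,t^{\rho(k-1)}.$$
Substituting this back, the $\Gamma(\rho k)$ factors cancel, giving
$$D_t^\rho E_{\rho,1}(\lambda t^\rho)=\sum_{k=1}^\infty \frac{\lambda^k t^{\rho(k-1)}}{\Gamma(\rho(k-1)+1)}=\lambda\sum_{j=0}^\infty\frac{\lambda^j t^{\rho j}}{\Gamma(\rho j+1)}=\lambda E_{\rho,1}(\lambda t^\rho),$$
after the reindexing $j=k-1$, which is the claimed identity.

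The only non-trivial point is justifying the two interchanges (derivative with the sum, and fractional integral with the sum). Since $E_{\rho,\mu}$ is entire in its argument, the series $\sum \lambda^k t^{\rho k}/\Gamma(\rho k+1)$ and its formal derivative converge absolutely and uniformly for $t$ in any compact subinterval of $(0,T]$ and $\lambda$ in any bounded set; the monotone/dominated convergence theorem then legitimizes pulling $J_t^{\rho-1}$ under the sum, since the integrands are nonnegative after factoring out the phase of $\lambda^k$ and the Beta integrals yield a series with the same radius of convergence as the original. This uniform-convergence verification is the only technical step; everything else is bookkeeping with the Gamma function. An alternative, even shorter route would be to apply the Laplace transform identity $\mathcal{L}\{E_{\rho,1}(\lambda t^\rho)\}(s)=s^{\rho-1}/(s^\rho-\lambda)$ together with $\mathcal{L}\{D_t^\rho h\}(s)=s^\rho \widehat h(s)-s^{\rho-1}h(0)$ and invert, which sidesteps the convergence issue entirely.
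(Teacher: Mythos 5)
Your computation is correct, and it is worth noting that the paper does not prove this lemma at all: it is simply quoted from the book of Gorenflo, Kilbas, Mainardi and Rogozin (reference \cite{Gorenflo}, p.~446). Your termwise argument is therefore a genuinely self-contained substitute for the citation. The chain
\[
\frac{d}{dt}\,\frac{t^{\rho k}}{\Gamma(\rho k+1)}=\frac{t^{\rho k-1}}{\Gamma(\rho k)},\qquad
J_t^{\rho-1}t^{\rho k-1}=\frac{\Gamma(\rho k)}{\Gamma(\rho(k-1)+1)}\,t^{\rho(k-1)},
\]
followed by the reindexing $j=k-1$ is exactly the standard derivation, and the Beta-integral evaluation is valid because $\xi^{\rho k-1}$ is integrable at $0$ for $k\ge1$ and $(t-\xi)^{-\rho}$ is integrable at $t$ for $\rho<1$. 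The interchange of $J_t^{\rho-1}$ with the sum is most cleanly justified by Tonelli applied to the absolutely convergent majorant series $\sum_k|\lambda|^k\xi^{\rho k-1}/\Gamma(\rho k)$, which is what your dominated-convergence remark amounts to; this is routine since $E_{\rho,1}$ is entire. Two small caveats: the lemma is stated for all $\rho>0$, but the paper's definition of $D_t^\rho$ (and hence your proof) only covers $\rho\in(0,1)$, which is the only range used anywhere in the paper; and your alternative Laplace-transform route is essentially the mechanism the authors themselves use in Lemma 2.4, so it would fit the paper's style equally well.
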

The following lemma would be an extension of the result of \cite{RC}. He has shown the case $\alpha^2\neq \lambda$ for  homogeneous equation. We have been able to prove this extension using similar ideas.
\begin{lem}\label{eq2} Let $g(t)\in C[0,T]$ and $\varphi_{0},\varphi_{1} $ are known numbers. Then the unique solution of the Cauchy problem
\begin{equation}\label{prob2}
\begin{cases}
  & (D_{t}^{\rho })^{2}y(t)+2\alpha D_{t}^{\rho }y(t)+\lambda y(t)=g(t), \quad  0<t\le T; \\
 & \underset{t\to 0}{\mathop{\lim }}\,D_{t}^{\rho }y(t)={{\varphi }_{0}},\\
 & y(0)={{\varphi }_{1}}, \\
\end{cases}
\end{equation}
has the form
\begin{equation}\label{1}y(t)=
\begin{cases}
  & y_{1}(t),\quad \alpha^{2}\neq \lambda; \\
 & y_{2}(t),\quad \alpha^{2}=\lambda.
\end{cases}
\end{equation}
Here
$$
y_{1}(t)=\frac{(\sqrt{\alpha^{2}-\lambda}+\alpha)\varphi_{1}}{2\sqrt{\alpha^{2}-\lambda}}E_{\rho,1}((-\alpha+\sqrt{\alpha^{2}-\lambda})t^{\rho})+\frac{(\sqrt{\alpha^{2}-\lambda}-\alpha)\varphi_{1}}{2\sqrt{\alpha^{2}-\lambda}}E_{\rho,1}((-\alpha-\sqrt{\alpha^{2}-\lambda})t^{\rho})
$$
$$
+\frac{1}{2\sqrt{\alpha^{2}-\lambda}}\big[E_{\rho,1}((-\alpha+\sqrt{\alpha^{2}-\lambda})t^{\rho})-E_{\rho,1}((-\alpha-\sqrt{\alpha^{2}-\lambda})t^{\rho})  \big]\varphi_{0}
$$
$$
+\frac{1}{2\sqrt{\alpha^{2}-\lambda}}\int_{0}^{t}(t-\tau)^{\rho-1}E_{\rho,\rho}((-\alpha+\sqrt{\alpha^{2}-\lambda})(t-\tau)^{\rho})g(\tau)d\tau
$$
$$
-\frac{1}{2\sqrt{\alpha^{2}-\lambda}}\int_{0}^{t}(t-\tau)^{\rho-1}E_{\rho,\rho}((-\alpha-\sqrt{\alpha^{2}-\lambda})(t-\tau)^{\rho})g(\tau)d\tau,
$$
$$
y_{2}(t)=t^{\rho}E^{2}_{\rho,1+\rho}(-\alpha t^{\rho})\varphi_{0}+\alpha t^{\rho} E^{2}_{\rho,1+\rho}(-\alpha t^{\rho})\varphi_{1}
$$
$$
+E_{\rho,1}(-\alpha t^{\rho})\varphi_{1}+\int_{0}^{t}(t-\tau)^{2\rho-1}E^{2}_{\rho,2\rho}(-\alpha(t-\tau)^{\rho})g(\tau)d\tau.
$$
\end{lem}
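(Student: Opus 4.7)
The natural route is the Laplace transform. Writing $Y(s)=\mathcal{L}[y](s)$, $G(s)=\mathcal{L}[g](s)$ and using
$$\mathcal{L}[D_t^\rho y](s)=s^\rho Y(s)-s^{\rho-1}\varphi_1,\qquad \mathcal{L}[(D_t^\rho)^2 y](s)=s^{2\rho}Y(s)-s^{2\rho-1}\varphi_1-s^{\rho-1}\varphi_0,$$
the transformed equation can be solved algebraically to give
$$Y(s)=\frac{(s^{2\rho-1}+2\alpha s^{\rho-1})\varphi_1+s^{\rho-1}\varphi_0+G(s)}{s^{2\rho}+2\alpha s^\rho+\lambda}.$$
Everything then reduces to inverting this rational expression in $s^\rho$, for which I would use the standard pairs
$$\mathcal{L}[E_{\rho,1}(\mu t^\rho)](s)=\frac{s^{\rho-1}}{s^\rho-\mu},\qquad \mathcal{L}[t^{\rho-1}E_{\rho,\rho}(\mu t^\rho)](s)=\frac{1}{s^\rho-\mu},$$
together with the Prabhakar analogue $\mathcal{L}[t^{\mu-1}E^{\gamma}_{\rho,\mu}(\omega t^\rho)](s)=s^{\rho\gamma-\mu}/(s^\rho-\omega)^\gamma$ when the denominator is a square.

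In the case $\alpha^2\neq\lambda$ the denominator factors as $(s^\rho-\mu_+)(s^\rho-\mu_-)$ with $\mu_{\pm}=-\alpha\pm\sqrt{\alpha^2-\lambda}$. Partial fractions give
$$\frac{1}{(s^\rho-\mu_+)(s^\rho-\mu_-)}=\frac{1}{\mu_+-\mu_-}\Bigl[\frac{1}{s^\rho-\mu_+}-\frac{1}{s^\rho-\mu_-}\Bigr],\quad \frac{s^\rho}{(s^\rho-\mu_+)(s^\rho-\mu_-)}=\frac{1}{\mu_+-\mu_-}\Bigl[\frac{\mu_+}{s^\rho-\mu_+}-\frac{\mu_-}{s^\rho-\mu_-}\Bigr].$$
Writing $s^{2\rho-1}=s^{\rho-1}\cdot s^\rho$, the coefficient of $\varphi_1$ collapses to $(\mu_++2\alpha)(\mu_+-\mu_-)^{-1}E_{\rho,1}(\mu_+ t^\rho)-(\mu_-+2\alpha)(\mu_+-\mu_-)^{-1}E_{\rho,1}(\mu_- t^\rho)$, and since $\mu_\pm+2\alpha=\alpha\pm\sqrt{\alpha^2-\lambda}$ and $\mu_+-\mu_-=2\sqrt{\alpha^2-\lambda}$, this is exactly the first two terms of $y_1(t)$. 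The $\varphi_0$- and $g$-contributions invert immediately to the difference of Mittag-Leffler functions and the convolution displayed in the statement.

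For $\alpha^2=\lambda$ the denominator is $(s^\rho+\alpha)^2$. The $\varphi_0$-term inverts via the Prabhakar pair with $\gamma=2$, $\mu=\rho+1$, yielding $t^\rho E^{2}_{\rho,\rho+1}(-\alpha t^\rho)\varphi_0$. For $\varphi_1$ I would split $s^{2\rho-1}+2\alpha s^{\rho-1}=s^{\rho-1}(s^\rho+\alpha)+\alpha s^{\rho-1}$, so that the fraction decomposes as $s^{\rho-1}/(s^\rho+\alpha)+\alpha s^{\rho-1}/(s^\rho+\alpha)^2$, whose inverses are $E_{\rho,1}(-\alpha t^\rho)$ and $\alpha t^\rho E^{2}_{\rho,\rho+1}(-\alpha t^\rho)$; the convolution for $g$ comes from inverting $G(s)/(s^\rho+\alpha)^2$ with $\mu=2\rho$, $\gamma=2$. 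To confirm that these formulas are genuine classical solutions, I would substitute them back into (\ref{prob2}) using Lemma (\ref{Dervitavi}) for the $E_{\rho,1}$ terms and the identity (\ref{Prahma}) to compute $D_t^\rho$ and $(D_t^\rho)^2$ of the $E^{2}_{\rho,\mu}$ terms, and I would check the initial conditions from $E_{\rho,1}(0)=1$, $E^{2}_{\rho,\rho+1}(0)=1/\Gamma(\rho+1)$.

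Uniqueness follows from linearity: the difference of two solutions satisfies the homogeneous equation with zero data, so the Laplace computation forces $Y(s)\equiv 0$ and hence $y\equiv 0$; alternatively one factors the operator as $(D_t^\rho-\mu_+)(D_t^\rho-\mu_-)$ and appeals to uniqueness for each first-order fractional Cauchy problem. The principal obstacle is the algebraic bookkeeping in the case $\alpha^2\neq\lambda$, where the contributions of $s^{2\rho-1}\varphi_1$ and $2\alpha s^{\rho-1}\varphi_1$ must be combined through two distinct partial-fraction identities so that the coefficients line up with the factored form of $y_1(t)$; in the confluent case the subtlety is to recognise that (\ref{Prahma}) is exactly what is needed to evaluate $D_t^\rho[t^\rho E^{2}_{\rho,\rho+1}(-\alpha t^\rho)]$ cleanly when verifying the equation.
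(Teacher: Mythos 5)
Your proposal is correct and follows essentially the same route as the paper: apply the Laplace transform to reduce \eqref{prob2} to the algebraic identity \eqref{laplas}, then invert by partial fractions in $s^\rho$ using the Mittag--Leffler pairs when $\alpha^2\neq\lambda$ and the Prabhakar pair for the double root $(s^\rho+\alpha)^2$ when $\alpha^2=\lambda$. The only difference is cosmetic: you carry out the partial-fraction bookkeeping explicitly where the paper cites \cite{RC} and \cite{Mainardi} for the ready-made inversions.
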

\begin{proof} We use the Laplace transform to prove the lemma. Let us remind that the Laplace transform of a function $f(t)$ is defined as (see \cite{Mainardi})
\[
L[f](s)=\hat{f}(s)=\int_{0}^{\infty}e^{-st}f(t)dt.
\]
The inverse Laplace transform is defined by
\[
L^{-1}[\hat{f}](t)=\frac{1}{2\pi i}\int_{C}e^{st}\hat{f}(s)ds,
\]
where $C$ is a contour parallel to the imaginary axis and to the right of the singularities of  $\hat{f}$.

Let us apply the Laplace transform to equation (\ref{prob2}).  Then equation (\ref{prob2}) becomes:
\[
s^{2\rho}\hat{y}(s)+2\alpha s^{\rho}\hat{y}(s)+\lambda\hat{y}(s)-s^{2\rho-1}y(0)-s^{\rho-1}\lim_{t\to 0}D^{\rho}_{t}y(t)-2\alpha s^{\rho-1}y(0)=\hat{g}(s),
\]
it follows from this
\begin{equation}\label{laplas}
\hat{y}(s)=\frac{\hat{g}(s)+s^{2\rho-1}{y}(0)+s^{\rho-1}\lim_{t\to 0}D^{\rho}_{t}y(t)+2\alpha s^{\rho-1}y(0)}{s^{2\rho}+2\alpha s^{\rho}+\lambda}.
\end{equation}
\textbf{Case 1}. Let  $\alpha^{2}\neq \lambda$.

Write $\hat{y}(s)=\hat{y}_{0}(s)+\hat{y}_{1}(s)$, where
\[
\hat{y}_{0}(s)=\frac{(s^{2\rho-1}+2\alpha s^{\rho-1})\varphi_{0}+s^{\rho-1}\varphi_{1}}{{s^{2\rho}+2\alpha s^{\rho}}+\lambda}, \hat{y}_{1}(s)=\frac{\hat{g}(s)}{{s^{2\rho}+2\alpha s^{\rho}}+\lambda},
\]
furthemore
\[
y(t)=L^{-1}[\hat{y}_{0}(s)]+L^{-1}[\hat{y}_{1}(s)].
\]
As in the work of \cite{RC} when we apply inverse Laplace transform we get the following expression:
\begin{equation}\label{y0}
L^{-1}[\hat{y}_{0}(s)]=\frac{(\sqrt{\alpha^{2}-\lambda}+\alpha)\varphi_{1}+\varphi_{0}}{2\sqrt{\alpha^{2}-\lambda}}E_{\rho,1}\big((-\alpha+\sqrt{\alpha^{2}-\lambda})t^{\rho}\big)
\end{equation}
$$
+\frac{(\sqrt{\alpha^{2}-\lambda}-\alpha)\varphi_{1}-\varphi_{0}}{2\sqrt{\alpha^{2}-\lambda}}E_{\rho,1}\big((-\alpha-\sqrt{\alpha^{2}-\lambda})t^{\rho}\big).
$$
For the second term on the right one can obtain the inverse by splitting the function into simpler functions.
\begin{equation}\label{y1}
L^{-1}[\hat{y}_{1}(s)]=L^{-1}\bigg[\frac{\hat{g}(s)}{s^{2\rho}+2\alpha s^{\rho}+\lambda}\bigg]=L^{-1}\bigg[\frac{1}{s^{2\rho}+2\alpha s^{\rho}+\lambda}\bigg]\ast L^{-1}[\hat{g}(s)].
\end{equation}
By $f \ast g$ we denoted the Laplace convolution of functions defined by  $(f\ast g)(t)=\int_{0}^{t}f(\tau)g(t-\tau)d\tau$.

The initial component can be easily obtained by employing the subsequent straightforward observations.
\begin{equation*}
\begin{split}
    L^{-1}\bigg[&\frac{1}{s^{2\rho}+2\alpha s^{\rho}+\lambda}\bigg] = L^{-1}\bigg[\frac{1}{2\sqrt{\alpha^{2}-\lambda}}\bigg(\frac{1}{s^{\rho}+\alpha-\sqrt{\alpha^{2}-\lambda}}-\frac{1}{s^{\rho}+\alpha+\sqrt{\alpha^{2}-\lambda}}\bigg)\bigg]\\
    &=\frac{1}{2\sqrt{\alpha^{2}-\lambda}}L^{-1}\bigg[\frac{1}{s^{\rho}+\alpha-\sqrt{\alpha^{2}-\lambda}}\bigg]-\frac{1}{2\sqrt{\alpha^{2}-\lambda}}L^{-1}\bigg[\frac{1}{s^{\rho}+\alpha+\sqrt{\alpha^{2}-\lambda}}\bigg]
\end{split}
\end{equation*}
The inverse transforms above given functions are directly related to the Mittag-Leffler functions \cite{RC}. We have the first term of the convolution is the following
\begin{equation*}
    \frac{1}{2\sqrt{\alpha^{2}-\lambda}}E_{\rho,\rho}\bigg((-\alpha+\sqrt{\alpha^{2}-\lambda})t^{\rho}\bigg) -\frac{1}{2\sqrt{\alpha^{2}-\lambda}}E_{\rho,\rho}\bigg((-\alpha-\sqrt{\alpha^{2}-\lambda})t^{\rho}\bigg).
\end{equation*}

Plugging this function into \eqref{y1} and combining it with \eqref{y0} we have
$$
y(t)=\frac{(\sqrt{\alpha^{2}-\lambda}+\alpha)\varphi_{1}}{2\sqrt{\alpha^{2}-\lambda}}E_{\rho,1}((-\alpha+\sqrt{\alpha^{2}-\lambda})t^{\rho})+\frac{(\sqrt{\alpha^{2}-\lambda}-\alpha)\varphi_{1}}{2\sqrt{\alpha^{2}-\lambda}}E_{\rho,1}((-\alpha-\sqrt{\alpha^{2}-\lambda})t^{\rho})
$$
$$
+\frac{1}{2\sqrt{\alpha^{2}-\lambda}}\big[E_{\rho,1}((-\alpha+\sqrt{\alpha^{2}-\lambda})t^{\rho})-E_{\rho,1}((-\alpha-\sqrt{\alpha^{2}-\lambda})t^{\rho})  \big]\varphi_{0}
$$
$$
+\frac{1}{2\sqrt{\alpha^{2}-\lambda}}\int_{0}^{t}(t-\tau)^{\rho-1}E_{\rho,\rho}((-\alpha+\sqrt{\alpha^{2}-\lambda})(t-\tau)^{\rho})g(\tau)d\tau
$$
$$
-\frac{1}{2\sqrt{\alpha^{2}-\lambda}}\int_{0}^{t}(t-\tau)^{\rho-1}E_{\rho,\rho}((-\alpha-\sqrt{\alpha^{2}-\lambda})(t-\tau)^{\rho})g(\tau)d\tau.
$$

\textbf{Case 2}. Let  $\alpha^{2}=\lambda$. In this case (\ref{laplas}) has the following form
\[
\hat{y}(s)=\frac{\hat{g}(s)+s^{2\rho-1}{y}(0)+s^{\rho-1}\lim_{t\to 0}D^{\rho}_{t}y(t)+2\alpha s^{\rho-1}y(0)}{(s^{\rho}+\alpha)^{2}}.
\]
Therefore
\[
\hat{y}(s)=\frac{s^{\rho-1}}{s^{\rho}+\alpha}y(0)+\frac{\alpha s^{\rho-1}}{(s^{\rho}+\alpha)^{2}}y(0)+\frac{s^{\rho-1}}{(s^{\rho}+\alpha)^{2}}\lim_{t \to 0}D^{\rho}_{t}y(t)+\frac{1}{(s^{\rho}+\alpha)^{2}}\hat{g(s)}.
\]
Passing to the inverse Laplace transform (see \cite{Mainardi},p.226,E67):

\[
y(t)=L^{-1}\big[\frac{s^{\rho-1}}{s^{\rho}+\alpha}y(0)\big]+L^{-1}\big[\frac{\alpha s^{\rho-1}}{(s^{\rho}+\alpha)^{2}}y(0)\big]+L^{-1}\big[\frac{s^{\rho-1}}{(s^{\rho}+\alpha)^{2}}\lim_{t \to 0}D^{\rho}_{t}y(t)\big]+L^{-1}\big[\frac{1}{(s^{\rho}+\alpha)^{2}}\hat{g(s)}\big],
\]
one has
$$
y(t)=E_{\rho,1}(-\alpha t^{\rho})\varphi_{1}+\alpha t^{\rho} E_{\rho,1+\rho}^{2}(-\alpha t^{\rho})\varphi_{1}+t^{\rho}E_{\rho,1+\rho}^{2}(-\alpha t^{\rho})\varphi_{0}+
\int_{0}^{t}(t-\tau)^{2\rho-1}E^{2}_{\rho,2\rho}(-\alpha(t-\tau)^{\rho})g(\tau)d\tau.
$$
\end{proof}
\begin{lem} Let $g(t)\in C[0,T]$.Then the unique solution of the Cauchy problem
\begin{equation}\label{Integro}
\begin{cases}
  & D_{t}^{\rho }u(t)+2\alpha u(t)+\alpha^{2}J^{-\rho}_{t}u(t)=J^{-\rho}_{t}g(t), \quad  0<t\le T; \\
 & u(0)=0, \\
\end{cases}
\end{equation}
with $0<\rho<1$ and $\alpha\in \mathbb{C}$ has the form
\[
u(t)=\int_{0}^{t}(t-\tau)^{2\rho-1}E^{2}_{\rho,2\rho}(-\alpha(t-\tau)^{\rho})g(\tau)d\tau.
\]
\end{lem}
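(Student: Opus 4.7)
The plan is to mimic the Laplace transform strategy of Case~2 of Lemma~\ref{eq2}. The first step is to apply the Laplace transform to the equation. Since $u(0)=0$, the Caputo derivative transforms as $L[D_t^{\rho}u](s)=s^{\rho}\hat u(s)$, while $J_t^{-\rho}$, which by \eqref{def0} is the Riemann--Liouville fractional integral of order $\rho$, transforms as $L[J_t^{-\rho}u](s)=s^{-\rho}\hat u(s)$ (and similarly $L[J_t^{-\rho}g](s)=s^{-\rho}\hat g(s)$). The equation thereby becomes
\[
\big(s^{\rho}+2\alpha+\alpha^{2}s^{-\rho}\big)\hat u(s)=s^{-\rho}\hat g(s),
\]
and multiplication by $s^{\rho}$ factors the left-hand side, yielding
\[
(s^{\rho}+\alpha)^{2}\,\hat u(s)=\hat g(s),\qquad \text{i.e.}\qquad \hat u(s)=\frac{\hat g(s)}{(s^{\rho}+\alpha)^{2}}.
\]

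The second step is to invert. The decisive identity is the Prabhakar Laplace relation
\[
L\big[t^{2\rho-1}E^{2}_{\rho,2\rho}(-\alpha t^{\rho})\big](s)=\frac{1}{(s^{\rho}+\alpha)^{2}},
\]
which is the $\mu=2\rho$, $\gamma=2$ instance of the standard formula already invoked in Case~2 of Lemma~\ref{eq2} (cited there from Mainardi, p.~226, E67). The convolution theorem then gives
\[
u(t)=\int_{0}^{t}(t-\tau)^{2\rho-1}E^{2}_{\rho,2\rho}(-\alpha(t-\tau)^{\rho})g(\tau)\,d\tau,
\]
which is the claimed representation. Uniqueness is immediate from the linearity of the problem and the injectivity of the Laplace transform: if two solutions exist, their difference $u$ satisfies $(s^{\rho}+\alpha)^{2}\hat u(s)\equiv 0$ on a right half-plane, so $\hat u\equiv 0$ and $u\equiv 0$.

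I do not anticipate a substantial obstacle, because the only non-trivial ingredient is the Laplace identification of $1/(s^{\rho}+\alpha)^{2}$ with $t^{2\rho-1}E^{2}_{\rho,2\rho}(-\alpha t^{\rho})$, and this identity is exactly the one already used (and may therefore simply be cited) in the preceding lemma. An alternative, more pedestrian route would be to substitute the candidate formula directly into the equation and verify it by applying $D_t^{\rho}$ and $J_t^{-\rho}$ termwise, using \eqref{Prahma} together with the Mittag--Leffler derivative identity \eqref{Dervitavi}; this works but is considerably messier than the Laplace route and moreover parallels the structure of Case~2 of Lemma~\ref{eq2} less transparently.
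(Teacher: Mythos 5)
Your proposal is correct and follows essentially the same route as the paper: apply the Laplace transform (using $u(0)=0$), reduce to $\hat u(s)=\hat g(s)/(s^{\rho}+\alpha)^{2}$, and invert via the known Prabhakar--Mittag--Leffler transform pair from Mainardi together with the convolution theorem. Your added remark on uniqueness via injectivity of the Laplace transform is a small bonus the paper leaves implicit.
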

\begin{proof} Let us apply the Laplace transform to equation \eqref{Integro}. Then equation (\ref{Integro}) becomes:
\[
s^{\rho}\hat{u}(s)-s^{\rho-1}u(0)+2\alpha\hat{u}(s)+\alpha^{2}s^{-\rho}\hat{u}(s)=s^{-\rho}\hat{g}(s),
\]
it follows from this
\[
\hat{u}(s)=\frac{s^{-\rho}\hat{g}(s)}{s^{\rho}+2\alpha+\alpha^{2}s^{-\rho}}=\frac{\hat{g}(s)}{(s^{\rho}+\alpha)^{2}}.
\]
Passing to the inverse Laplace transform we obtain:
\[
u(t)=L^{-1}\big[\frac{1}{(s^{\rho}+\alpha)^{2}} \big]\ast L^{-1}[\hat{g}(s)].
\]
First term in the convolution is known (see \cite{Mainardi},p.226,E67) and one has
\[
u(t)=\int_{0}^{t}(t-\tau)^{2\rho-1}E^{2}_{\rho,2\rho}(-\alpha(t-\tau)^{\rho})g(\tau)d\tau.\qedhere
\]
\end{proof}
\begin{lem} The solution to the Cauchy problem
\begin{equation}\label{Dervitavi1}
\begin{cases}
  & D_{t}^{\rho }u(t)-\lambda u(t)=f(t),\quad  0<t\le T; \\
 & u(0)=0, \\
\end{cases}
\end{equation}
with $0<\rho<1$ and $\lambda\in\mathbb{C} $ has the form
\[
u(t)=\int_{0}^{t}(t-\tau)^{\rho-1}E_{\rho,\rho}(\lambda(t-\tau)^{\rho})f(\tau)d\tau.
\]
\end{lem}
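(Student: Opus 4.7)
The plan is to mirror the Laplace transform approach used in the two preceding lemmas, which is particularly clean here because the equation is first order in the fractional sense. I would begin by applying the Laplace transform to both sides of \eqref{Dervitavi1}. Using the standard formula
\[
L[D_t^\rho u](s) = s^\rho \hat u(s) - s^{\rho-1} u(0)
\]
combined with the initial condition $u(0)=0$, the equation reduces to the algebraic relation
\[
(s^\rho - \lambda)\hat u(s) = \hat f(s),
\]
so that $\hat u(s) = \hat f(s)/(s^\rho - \lambda)$.

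Next I would invert this product via the convolution theorem, writing
\[
u(t) = L^{-1}\!\left[\frac{1}{s^\rho - \lambda}\right] \ast f(t).
\]
The first factor is a well-known Mittag-Leffler Laplace pair (see \cite{Mainardi}), namely
\[
L^{-1}\!\left[\frac{1}{s^\rho - \lambda}\right](t) = t^{\rho-1} E_{\rho,\rho}(\lambda t^\rho),
\]
and substituting this into the convolution yields exactly the claimed representation
\[
u(t)=\int_{0}^{t}(t-\tau)^{\rho-1}E_{\rho,\rho}(\lambda(t-\tau)^{\rho})f(\tau)\,d\tau.
\]

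For uniqueness I would argue by linearity: if $u_1$ and $u_2$ are two solutions, their difference $w=u_1-u_2$ satisfies the homogeneous equation $D_t^\rho w - \lambda w = 0$ with $w(0)=0$. Taking the Laplace transform gives $(s^\rho-\lambda)\hat w(s)=0$, whence $\hat w\equiv 0$ and therefore $w\equiv 0$ by the injectivity of the Laplace transform on the relevant class of functions.

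The only real obstacle is technical rather than conceptual: one must justify the use of the Laplace transform (existence of $\hat f(s)$ for $\operatorname{Re} s$ large enough, which follows from $f\in C[0,T]$ once $f$ is extended, say, by zero beyond $T$), and then verify \emph{a posteriori} that the integral formula does satisfy both the equation and the initial condition. The initial condition $u(0)=0$ is immediate from the factor $(t-\tau)^{\rho-1}$ being integrable together with the continuity of $f$; checking the equation amounts to applying $D_t^\rho$ under the integral and using identity \eqref{Dervitavi} (or its variant for $E_{\rho,\rho}$), which is the same routine verification that underlies Lemma \ref{eq2}.
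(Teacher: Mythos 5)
Your argument is correct, but it is worth noting that the paper does not actually prove this lemma: it simply cites Kilbas--Srivastava--Trujillo \cite{Kil} (p.~231) for the case $\lambda\in\mathbb{R}$ and asserts that ``similar ideas'' extend to $\lambda\in\mathbb{C}$. The proof in that reference proceeds by reducing the Cauchy problem to an equivalent Volterra integral equation and solving it by successive approximations, whereas you use the Laplace transform together with the pair $L[t^{\rho-1}E_{\rho,\rho}(\lambda t^{\rho})](s)=(s^{\rho}-\lambda)^{-1}$ and the convolution theorem. Your route has the advantage of being uniform in $\lambda\in\mathbb{C}$ (so the hand-waving about the complex case disappears) and of matching the method the paper already uses for Lemma \ref{eq2} and the integro-differential lemma, at the cost of the usual technical caveats about applicability of the Laplace transform and injectivity on the relevant function class --- caveats you acknowledge and which the paper itself glosses over in its other Laplace-transform arguments. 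The a posteriori verification you mention (differentiating under the integral sign to check the equation and the initial condition directly) is exactly what makes the formal computation airtight, and is closer in spirit to the successive-approximation proof of the cited reference. Either way, the statement is established; your proof is self-contained where the paper's is not.
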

The proof of this lemme for $\lambda\in\mathbb{R}$ can be found in (see \cite{Kil},p.231). In complex case similar ideas will lead us to the same conclusion.

Consider the operator $E_{\rho, \mu} (t^{\rho} A): H\rightarrow H$ defined by the spectral theorem of J. von Neumann:
\[
E_{\rho, \mu} (t^{\rho} A)g = \sum\limits_{k=1}^\infty E_{\rho,\mu} (t^{\rho}\lambda_{k}) g_k v_k,
\]
here and everywhere below, by $g_k$  we will denote the Fourier coefficients of a vector $g\in H$: $g_k=(g,v_k)$.
\begin{lem}\label{Estimate1} Let $\alpha>0$. Then for any $g(t)\in C(H)$  one has $E_{\rho,\mu}(-S t^{\rho})g(t)\in C(H)$
and  $SE_{\rho,\mu}(-St^{\rho})g(t)\in C((0,T];H)$.  Moreover,the following estimates hold:
\begin{equation}\label{ES}
||E_{\rho, \mu} (-t^{\rho}S)g(t)||_{C(H)}\leq M||g(t)||_{C(H)},
\end{equation}
\begin{equation}\label{SES}
 ||SE_{\rho, \mu} (-t^{\rho} S)g(t)||\leq C_{1}t^{-\rho}||g(t)||_{C(H)}, \quad t>0.
\end{equation}
If $g(t)\in D(A^{\frac{1}{2}})$ for all $t\in[0,T]$, then
\begin{equation}\label{SES1}
||SE_{\rho, \mu} (-t^{\rho} S)g(t)||_{C(H)} \leq C_{2}\max_{0\leq t \leq T}||g(t)||_{\frac{1}{2}},
\end{equation}
\begin{equation}\label{AES}
||AE_{\rho, \mu} (-t^{\rho} S)g(t)|| \leq C_{3}t^{-\rho}\max_{0\leq t \leq T}||g(t)||_{\frac{1}{2}}, \quad t>0.
\end{equation}
Here $S$ has two states: $S^{-}$ and $S^{+}$,
\[
S^{-}=\alpha I-(\alpha^{2}I-A)^{\frac{1}{2}},\quad  S^{+}=\alpha I+(\alpha^{2}I-A)^{\frac{1}{2}}.
\]
\end{lem}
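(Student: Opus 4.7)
The strategy is to pass to the spectral representation
\[
E_{\rho,\mu}(-t^\rho S^{\pm})g(t)=\sum_{k=1}^\infty E_{\rho,\mu}(-t^\rho s_k^{\pm})\,g_k(t)\,v_k,
\]
where $s_k^{\pm}=\alpha\pm\sqrt{\alpha^{2}-\lambda_k}$ are the eigenvalues of $S^{\pm}$ on $v_k$ and $g_k(t)=(g(t),v_k)$, and reduce each inequality to a pointwise estimate on the scalar Mittag--Leffler factor. A preliminary step is to check that Corollary~\ref{cor} applies to $z_k(t)=-t^\rho s_k^{\pm}$. For the finitely many indices with $\lambda_k\le\alpha^{2}$ the numbers $s_k^{\pm}$ are real and positive, so $\arg z_k=\pi$; for the remaining indices $s_k^{\pm}=\alpha\pm i\sqrt{\lambda_k-\alpha^{2}}$ has positive real part $\alpha$, so $|\arg z_k|\in(\pi/2,\pi)$. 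Any $\beta$ satisfying $\pi\rho/2<\beta<\pi\rho$ and $\beta\le\pi/2$ is therefore admissible uniformly in $k$ and $t$, so
\[
|E_{\rho,\mu}(-t^\rho s_k^{\pm})|\le\frac{M}{1+|s_k^{\pm}|\,t^\rho}.
\]

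A short computation gives $|s_k^{\pm}|=\sqrt{\lambda_k}$ whenever $\lambda_k>\alpha^{2}$, while $s_k^{+}s_k^{-}=\lambda_k$ and $|s_k^{+}|\le 2\alpha$ on the exceptional set, which, combined with $\lambda_k\ge\lambda_1>0$, yield the two-sided bound $c\sqrt{\lambda_k}\le|s_k^{\pm}|\le C\sqrt{\lambda_k}$ valid for every $k$, with constants depending only on $\alpha$ and $\lambda_1$. With these preparations the four estimates become one-line Parseval computations. For (ES): $\|E_{\rho,\mu}(-t^\rho S)g(t)\|^{2}\le M^{2}\sum_k|g_k(t)|^{2}=M^{2}\|g(t)\|^{2}$. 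For (SES): the inequality $|s|/(1+|s|t^\rho)\le t^{-\rho}$ gives $|s_k^{\pm}E_{\rho,\mu}(-t^\rho s_k^{\pm})|\le Mt^{-\rho}$, whence $\|SE_{\rho,\mu}(-t^\rho S)g(t)\|\le C_1t^{-\rho}\|g(t)\|$. For (SES1): the crude bound $|E_{\rho,\mu}|\le M$ and $|s_k^{\pm}|^{2}\le C\lambda_k$ give $|s_k^{\pm}E_{\rho,\mu}|^{2}|g_k(t)|^{2}\le CM^{2}\lambda_k|g_k(t)|^{2}$, which sums to $CM^{2}\|g(t)\|_{1/2}^{2}$. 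For (AES), the elementary inequality
\[
\frac{\lambda_k}{1+\sqrt{\lambda_k}\,t^\rho}\le\frac{\sqrt{\lambda_k}}{t^\rho},\qquad t>0,
\]
(case split on whether $\sqrt{\lambda_k}\,t^\rho\ge 1$ or not) combined with $|s_k^{\pm}|\ge c\sqrt{\lambda_k}$ yields $|\lambda_k E_{\rho,\mu}(-t^\rho s_k^{\pm})|^{2}|g_k(t)|^{2}\le C'M^{2}t^{-2\rho}\lambda_k|g_k(t)|^{2}$, so Parseval closes the argument.

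For the regularity claims, continuity of $E_{\rho,\mu}(-t^\rho S)g(t)$ on $[0,T]$ follows from the splitting
\[
E_{\rho,\mu}(-t^\rho S)g(t)-E_{\rho,\mu}(-t_0^\rho S)g(t_0)=E_{\rho,\mu}(-t^\rho S)[g(t)-g(t_0)]+[E_{\rho,\mu}(-t^\rho S)-E_{\rho,\mu}(-t_0^\rho S)]g(t_0),
\]
whose first term tends to zero by (ES) and continuity of $g$, while the second vanishes by dominated convergence applied to the Parseval sum. The analogous decomposition with $S$ inserted, using (SES) for the first term, delivers continuity of $SE_{\rho,\mu}(-t^\rho S)g(t)$ on $[t_1,T]$ for any $t_1>0$, hence on $(0,T]$. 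The main obstacle I anticipate is the careful justification of the sector condition for the complex eigenvalues $s_k^{\pm}$ with $\lambda_k>\alpha^{2}$; once that and the order estimate on $|s_k^{\pm}|$ are in place, the rest is routine arithmetic with Parseval.
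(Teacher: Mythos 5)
Your proof is correct and follows essentially the same route as the paper: spectral decomposition, Parseval's identity, and the decay bound $|E_{\rho,\mu}(z)|\le M/(1+|z|)$ from Corollary~\ref{cor}, with the four estimates reduced to elementary scalar inequalities in $\lambda_k$ and $t^{\rho}$. The only differences are ones of bookkeeping and completeness: where the paper replaces $|\alpha\pm\sqrt{\alpha^{2}-\lambda_k}|$ by $\sqrt{\lambda_k}$ via an asymptotic equivalence of sequences as $\lambda_k\to\infty$, you prove the uniform two-sided bound $c\sqrt{\lambda_k}\le|s_k^{\pm}|\le C\sqrt{\lambda_k}$ for all $k$, and you additionally verify the sector condition needed to invoke Corollary~\ref{cor} and supply the continuity argument, both of which the paper leaves implicit.
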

\begin{proof}By Parseval's equality one has
\[
||E_{\rho, \mu} (-S^{-} t^{\rho})g(t)||^2=||\sum\limits_{k=1}^\infty E_{\rho, \mu} (-(\alpha-\sqrt{\alpha^{2}-\lambda_{k}})t^{\rho}) g_k(t) v_k||^2=\sum\limits_{k=1}^\infty |E_{\rho, \mu} (-(\alpha-\sqrt{\alpha^{2}-\lambda_{k}})t^{\rho}) g_k(t)|^2.
\]
According to Corollary \ref{cor}, we write the following
\[
||E_{\rho, \mu} (-(\alpha-\sqrt{\alpha^{2}-\lambda_{k}})t^{\rho})g(t)||^2\leq M^{2} \sum\limits_{k=1}^\infty \bigg|\frac{ g_k(t)}{1+|\alpha-\sqrt{\alpha^{2}-\lambda_{k}}|t^{\rho}}\bigg|^2\leq M^{2} ||g(t)||^2,
 \]
 which concludes the assertion \eqref{ES}.
On the other hand,
\[
||S^{-}E_{\rho, \mu} (-t^{\rho} S^{-})g(t)||^{2}\leq M^{2}\sum\limits_{k=1}^\infty \frac{ |\alpha-\sqrt{\alpha^{2}-\lambda_{k}}|^{2}|g_k(t)|^{2}}{(1+t^{\rho}|\alpha-\sqrt{\alpha^{2}-\lambda_{k}|})^{2}},
\]
\[
u_{\lambda_{k}}(t)=\frac{ |\alpha-\sqrt{\alpha^{2}-\lambda_{k}}|^{2}|g_k(t)|^{2}}{(1+t^{\rho}|\alpha-\sqrt{\alpha^{2}-\lambda_{k}|})^{2}} \quad  \underset{\lambda_{k}\to \infty}{\sim} \quad v_{\lambda_{k}}(t)=\frac{ \lambda_{k}|g_k(t)|^{2}}{(1+t^{\rho}\sqrt{\lambda_{k}})^{2}},
\]
\[
\sum_{k=1}^{\infty}v_{\lambda_{k}}(t)=\sum_{k=1}^{\infty}\frac{ \lambda_{k}|g_k(t)|^{2}}{(1+t^{\rho}\sqrt{\lambda_{k}})^{2}}\leq t^{-2\rho}||g(t)||^{2}, \quad t>0.
\]

We say the seq ${u_{\lambda_{k}}}$ is eqvuivalent to the seq ${v_{\lambda_{k}}}$ if
\[
\lim_{\lambda_{k}\to \infty}\frac{{u_{\lambda_{k}}}}{{v_{\lambda_{k}}}}=1, \quad   {u_{\lambda_{k}}} \sim {v_{\lambda_{k}}}.
\]
Therefore
\[
||S^{-}E_{\rho, \mu} (-t^{\rho} S^{-})g(t)||^{2}\leq M^{2}C t^{-2\rho}||g(t)||^{2}=C_{1}t^{-2\rho}||g(t)||^{2}, \quad t>0.
\]
Obviously, if $g(t)\in D(A^{\frac{1}{2}})$ for all $t\in [0,T]$, then
\[
||S^{-}E_{\rho, \mu} (-t^{\rho} S^{-})g(t)||_{C(H)} \leq C_{2}\max_{0\leq t \leq T}||g(t)||_{\frac{1}{2}},
\]
\[
||AE_{\rho, \mu} (-t^{\rho} S^{-})g(t)|| \leq C_{3}t^{-\rho}\max_{0\leq t \leq T}||g(t)||_{\frac{1}{2}}, \quad t>0.
\]
A similar estimate is proved in exactly the same way with the operator $S^{-}$ replaced by the operator $S^{+}$.
\end{proof}

\begin{lem}\label{RES} Let   $\alpha>0$ and   $\lambda_{k}\neq \alpha^{2}$,  for all $k$. Then for  any $g(t)\in C(H)$  one has $R^{-1}E_{\rho,\mu}(-St^{\rho})g(t), SR^{-1}E_{\rho,\mu}(-St^{\rho})g(t)\in C(H)$ and $AR^{-1}E_{\rho, \mu} (-t^{\rho} S)g(t)\in C((0,T],H)$. Moreover,the following estimates hold:
\begin{equation}\label{RES}
||R^{-1}E_{\rho, \mu} (-t^{\rho} S)g(t)||_{C(H)} \leq C_{4} ||g(t)||_{C(H)},
\end{equation}
\begin{equation}\label{SRES}
||SR^{-1}E_{\rho, \mu} (-t^{\rho} S)g(t)||_{C(H)} \leq C_{5} ||g(t)||_{C(H)},
\end{equation}
\begin{equation}\label{ARES}
||AR^{-1}E_{\rho, \mu} (-t^{\rho} S)g(t)|| \leq C_{6}t^{-\rho}||g(t)||_{C(H)}, \quad t>0.
\end{equation}
Here
\[
 R^{-1}=(\alpha^{2}I-A)^{-\frac{1}{2}}.
\]
\end{lem}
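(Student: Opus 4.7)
The plan is to mimic closely the argument in Lemma \ref{Estimate1}, handling the two operator roots $S^{-}$ and $S^{+}$ separately and applying Parseval's equality together with Corollary \ref{cor}. Since $\lambda_k\to+\infty$ and $\lambda_k\neq\alpha^2$ for every $k$, there are only finitely many indices with $\lambda_k\le\alpha^2$, on which the factor $|(\alpha^2-\lambda_k)^{-1/2}|$ is bounded by some constant $C_0=C_0(\alpha)$, and for all large $k$ one has $\sqrt{\alpha^2-\lambda_k}=i\sqrt{\lambda_k-\alpha^2}$, so
\[
|s_k^{\pm}|=|\alpha\pm i\sqrt{\lambda_k-\alpha^2}|=\sqrt{\lambda_k},\qquad
|(\alpha^2-\lambda_k)^{-1/2}|=(\lambda_k-\alpha^2)^{-1/2}.
\]
These two facts will be used repeatedly.

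For the first estimate \eqref{RES}, Parseval gives
\[
\|R^{-1}E_{\rho,\mu}(-t^{\rho}S^{\pm})g(t)\|^2=\sum_{k=1}^{\infty}\frac{|E_{\rho,\mu}(-t^{\rho}s_k^{\pm})|^2}{|\alpha^2-\lambda_k|}|g_k(t)|^2,
\]
and by Corollary \ref{cor} each Mittag--Leffler factor is bounded by $M$. The sequence $|\alpha^2-\lambda_k|^{-1}$ is bounded (finitely many small terms are controlled by $C_0^2$, the rest behave like $\lambda_k^{-1}\to 0$), so the right-hand side is majorized by $C_4^2\|g(t)\|^2$, uniformly in $t$. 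Continuity in $t$ with values in $H$ follows from the dominated convergence theorem applied to the series, using continuity of $g$ and the uniform majorant just obtained.

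For \eqref{SRES} the same scheme yields
\[
\|S^{\pm}R^{-1}E_{\rho,\mu}(-t^{\rho}S^{\pm})g(t)\|^2\leq M^2\sum_{k=1}^{\infty}\frac{|s_k^{\pm}|^2}{|\alpha^2-\lambda_k|}|g_k(t)|^2,
\]
and the key observation is that $|s_k^{\pm}|^2/|\alpha^2-\lambda_k|=\lambda_k/(\lambda_k-\alpha^2)\to 1$ as $k\to\infty$, so the coefficient sequence is bounded and the sum is again dominated by $C_5^2\|g(t)\|^2$.

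The estimate \eqref{ARES} is the delicate point, because now we have an unbounded $\lambda_k$ on top. Parseval gives
\[
\|AR^{-1}E_{\rho,\mu}(-t^{\rho}S^{\pm})g(t)\|^2\leq M^2\sum_{k=1}^{\infty}\frac{\lambda_k^2}{|\alpha^2-\lambda_k|(1+|s_k^{\pm}|t^{\rho})^2}|g_k(t)|^2,
\]
and for large $k$ the coefficient is asymptotically equivalent to $\lambda_k/(1+\sqrt{\lambda_k}t^{\rho})^2$. Using the asymptotic-equivalence argument already exploited in Lemma \ref{Estimate1}, together with the bound $\sum_k\lambda_k|g_k(t)|^2/(1+\sqrt{\lambda_k}t^{\rho})^2\le t^{-2\rho}\|g(t)\|^2$ established there, the sum is majorized by $C_6^2 t^{-2\rho}\|g(t)\|^2$ for $t>0$. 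The main obstacle is precisely this combination of the $A$ and $R^{-1}$ spectral weights, but once one sees that $\lambda_k R^{-1}\sim\sqrt{\lambda_k}$, the previous lemma's machinery takes over verbatim. The argument for $S^{+}$ is identical to that for $S^{-}$, since the modulus $|s_k^{\pm}|$ and the modulus of the resolvent factor are the same for $\lambda_k>\alpha^2$.
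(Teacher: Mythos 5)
Your proposal is correct and follows essentially the same route as the paper's proof: Parseval's equality plus Corollary \ref{cor}, control of the spectral weights $|\alpha^2-\lambda_k|^{-1/2}$, $|s_k^{\pm}|\,|\alpha^2-\lambda_k|^{-1/2}$ and $\lambda_k|\alpha^2-\lambda_k|^{-1/2}$ via their large-$k$ asymptotics (the paper phrases this as equivalence of sequences $u_{\lambda_k}\sim v_{\lambda_k}$, you make it exact by noting $|s_k^{\pm}|=\sqrt{\lambda_k}$ for $\lambda_k>\alpha^2$), and reduction to the bound $\sum_k\lambda_k|g_k|^2/(1+\sqrt{\lambda_k}t^{\rho})^2\le t^{-2\rho}\|g\|^2$ already used in Lemma \ref{Estimate1}. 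Your explicit handling of the finitely many eigenvalues below $\alpha^2$ and the remark on continuity are slight improvements in completeness over the paper's version, but the argument is the same.
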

\begin{proof}In proving the lemma, we use Parseval's equality and Corollary \ref{cor} similarly to the proof of  Lemma \ref{Estimate1}:
\[
||R^{-1}E_{\rho, \mu} (-t^{\rho} S^{-})g(t)||^{2}\leq M^{2} \sum\limits_{k=1}^\infty \bigg|\frac{1}{\sqrt{\alpha^{2}-\lambda_{k}}}\frac{g_k(t)}{1+t^{\rho}|\alpha-\sqrt{\alpha^{2}-\lambda_{k}}|}\bigg|^{2},
\]
\[
u_{\lambda_{k}}(t)=\frac{|g_k(t)|^{2}}{|\sqrt{\alpha^{2}-\lambda_{k}}|^{2}|(1+t^{\rho}|\alpha-\sqrt{\alpha^{2}-\lambda_{k}|})^{2}} \quad  \underset{\lambda_{k}\to \infty}{\sim} \quad v_{\lambda_{k}}(t)=\frac{|g_k(t)|^{2}}{\lambda_{k}(1+t^{\rho}\sqrt{\lambda_{k}})^{2}},
\]
\[
\sum_{k=1}^{\infty}v_{\lambda_{k}}(t)=\sum_{k=1}^{\infty}\frac{ |g_k(t)|^{2}}{\lambda_{k}(1+t^{\rho}\sqrt{\lambda_{k}})^{2}}\leq C^{*} ||g(t)||^{2}.
\]
Therefore
\[
||R^{-1}E_{\rho, \mu} (-t^{\rho} S^{-})g(t)||^{2}\leq M^{2}C^{*} ||g(t)||^{2}_{C(H)}=C_{4}||g(t)||^{2}_{C(H)}.
\]
Similarly,
\[
||S^{-}R^{-1}E_{\rho, \mu} (-t^{\rho} S^{-})g(t)||^{2}\leq M^{2}\sum\limits_{k=1}^\infty \bigg|\frac{|\alpha-\sqrt{\alpha^{2}-\lambda_{k}}|}{\sqrt{\alpha^{2}-\lambda_{k}}}\frac{g_k(t)}{1+t^{\rho}|\alpha-\sqrt{\alpha^{2}-\lambda_{k}}|}\bigg|^{2},
\]
\[
u_{\lambda_{k}}(t)=\frac{|\alpha-\sqrt{\alpha^{2}-\lambda_{k}}|^{2}|g_k(t)|^{2}}{|\sqrt{\alpha^{2}-\lambda_{k}}|^{2}|(1+t^{\rho}|\alpha-\sqrt{\alpha^{2}-\lambda_{k}|})^{2}} \quad  \underset{\lambda_{k}\to \infty}{\sim} \quad  v_{\lambda_{k}}(t)=\frac{|g_k(t)|^{2}}{(1+t^{\rho}\sqrt{\lambda_{k}})^{2}},
\]
\[
\sum_{k=1}^{\infty}v_{\lambda_{k}}(t)=\sum_{k=1}^{\infty}\frac{ |g_k(t)|^{2}}{(1+t^{\rho}\sqrt{\lambda_{k}})^{2}}\leq ||g(t)||^{2}.
\]

It remains to prove estimate (\ref{ARES}).  We consider the case with the operator $S^{-}$. We have
\[
||AR^{-1}E_{\rho, \mu} (-t^{\rho} S^{-})g(t)||^{2}\leq M^{2}\sum\limits_{k=1}^\infty \bigg|\frac{1}{\sqrt{\alpha^{2}-\lambda_{k}}}\frac{\lambda_{k}g_k(t)}{1+t^{\rho}|\alpha-\sqrt{\alpha^{2}-\lambda_{k}}|}\bigg|^{2},
\]
\[
u_{\lambda_{k}}(t)=\frac{\lambda_{k}^{2}|g_k(t)|^{2}}{|\sqrt{\alpha^{2}-\lambda_{k}}|^{2}|(1+t^{\rho}|\alpha-\sqrt{\alpha^{2}-\lambda_{k}|})^{2}} \quad  \underset{\lambda_{k}\to \infty}{\sim} \quad  v_{\lambda_{k}}(t)=\frac{\lambda_{k}^{2}|g_k(t)|^{2}}{\lambda_{k}(1+t^{\rho}\sqrt{\lambda_{k}})^{2}},
\]
\[
\sum_{k=1}^{\infty}v_{\lambda_{k}}(t)=\sum_{k=1}^{\infty}\frac{ \lambda_{k}|g_k(t)|^{2}}{(1+t^{\rho}\sqrt{\lambda_{k}})^{2}}\leq t^{-2\rho} ||g(t)||^{2}, \quad t>0.
\]
then
\[
||AR^{-1}E_{\rho, \mu} (-t^{\rho} S^{-})g(t)||^{2}\leq M^{2}C^{\ast\ast} t^{-2\rho} ||g(t)||^{2}_{C(H)}=C_{6}t^{-2\rho}||g(t)||^{2}_{C(H)}, \quad t>0.
\]
Similar estimates are proved in exactly the same way for the operator $S^{+}$.
\end{proof}

\begin{lem}\label{aep} Let $\alpha>0$ and $\lambda_{k}\neq \alpha^{2}$,  for all $k$. Then for  any $g(t)\in C([0,T]; D(A^\epsilon))$ for some $0< \epsilon < 1 $. Then
\begin{equation}\label{A}
\bigg|\bigg|\int\limits_0^t(t-\tau)^{\rho-1} AR^{-1}E_{\rho, \rho}(-(t-\tau)^\rho S)g(\tau) d\tau   \bigg|\bigg|\leq C \max\limits_{0\leq t \leq T} ||g(t)||_\epsilon.
\end{equation}
\begin{equation}\label{SR}
\bigg|\bigg|\int\limits_0^t(t-\tau)^{\rho-1} SR^{-1}E_{\rho, \rho}(-(t-\tau)^\rho S)g(\tau) d\tau   \bigg|\bigg|\leq C \max\limits_{0\leq t \leq T} ||g(t)||_{\epsilon}.
\end{equation}
\begin{equation}\label{R}
\bigg|\bigg|\int\limits_0^t(t-\tau)^{\rho-1}R^{-1}E_{\rho, \rho}(-(t-\tau)^\rho S)g(\tau) d\tau   \bigg|\bigg|\leq C \max\limits_{0\leq t \leq T} ||g(t)||_{\epsilon}.
\end{equation}
\end{lem}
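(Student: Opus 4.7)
The plan is to prove all three estimates by a single template: apply the triangle inequality for the $H$-valued Bochner integral to reduce each assertion to a pointwise-in-$\tau$ bound on the $H$-norm of the integrand; expand that norm via Parseval's equality over the eigenbasis $\{v_k\}$; estimate each Mittag-Leffler factor by Corollary \ref{cor}; and finally convert the singularity at $\tau=t$ into an integrable weight $(t-\tau)^{2\epsilon\rho-1}$ using the elementary inequality \eqref{Large}.

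I would carry out \eqref{A} in detail and remark that \eqref{SR} and \eqref{R} follow by the same argument. Taking $S=S^-$ (the case $S=S^+$ is identical), Parseval combined with Corollary \ref{cor} yields
\[
\|AR^{-1}E_{\rho,\rho}(-(t-\tau)^\rho S^-)g(\tau)\|^2 \leq M^2\sum_{k=1}^\infty \frac{\lambda_k^{2}}{|\alpha^2-\lambda_k|}\cdot\frac{|g_k(\tau)|^2}{\bigl(1+|\alpha-\sqrt{\alpha^2-\lambda_k}|(t-\tau)^\rho\bigr)^{2}}.
\]
For the finitely many indices $k$ with $\lambda_k<\alpha^2$ the coefficient $\lambda_k^{2}/|\alpha^2-\lambda_k|$ is a fixed constant by the hypothesis $\lambda_k\ne\alpha^2$, while for $\lambda_k>\alpha^2$ it is comparable to $\lambda_k$. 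Multiplying by $(t-\tau)^{2(\rho-1)}$ and squaring \eqref{Large} dominates each summand by $C\lambda_k^{2\epsilon}(t-\tau)^{4\epsilon\rho-2}|g_k(\tau)|^2$, hence
\[
(t-\tau)^{\rho-1}\|AR^{-1}E_{\rho,\rho}(-(t-\tau)^\rho S^-)g(\tau)\|\leq C(t-\tau)^{2\epsilon\rho-1}\|g(\tau)\|_{\epsilon}.
\]
Since $2\epsilon\rho>0$, integrating in $\tau\in[0,t]$ produces the factor $T^{2\epsilon\rho}/(2\epsilon\rho)$ and yields \eqref{A}. For \eqref{SR} and \eqref{R} the operator factor in front of the Mittag-Leffler becomes $|\alpha\mp\sqrt{\alpha^2-\lambda_k}|/|\sqrt{\alpha^2-\lambda_k}|$ and $1/|\sqrt{\alpha^2-\lambda_k}|$, which are asymptotically bounded and $O(\lambda_k^{-1/2})$ respectively; the resulting summands are controlled by $C\lambda_k^{2\epsilon-1}|g_k(\tau)|^2(t-\tau)^{4\epsilon\rho-2}$ and $C\lambda_k^{2\epsilon-2}|g_k(\tau)|^2(t-\tau)^{4\epsilon\rho-2}$, both dominated by a constant multiple of $\lambda_k^{2\epsilon}|g_k(\tau)|^2(t-\tau)^{4\epsilon\rho-2}$ because $\lambda_k\geq\lambda_1>0$.

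The main obstacle is verifying that Corollary \ref{cor} applies uniformly in $k$: when $\lambda_k>\alpha^2$ the argument $z_k=-(\alpha\mp\sqrt{\alpha^2-\lambda_k})(t-\tau)^\rho$ is genuinely complex, so Lemma \ref{ml} requires $|\arg z_k|\geq\beta$ for some $\beta\in(\pi\rho/2,\pi\rho)$. Since $\mathrm{Re}\,z_k=-\alpha(t-\tau)^\rho<0$ one checks that $|\arg z_k|>\pi/2$ uniformly in $k$, and the assumption $\rho<1$ allows one to fix $\beta\in(\pi\rho/2,\pi/2)$, making the Mittag-Leffler estimate uniform. The secondary technicality from the finitely many eigenvalues with $\lambda_k\leq\alpha^2$ is absorbed into the constant $C$ by the standing hypothesis $\lambda_k\ne\alpha^2$, which keeps the resolvent factor $(\alpha^2-\lambda_k)^{-1/2}$ bounded on that finite set.
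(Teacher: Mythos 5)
Your argument is correct and is essentially the paper's own proof: both rest on Parseval's equality, the uniform bound of Corollary \ref{cor}, the elementary inequality \eqref{Large} converting the kernel into the integrable weight $(t-\tau)^{2\epsilon\rho-1}$ times $\lambda_k^{\epsilon-1/2}$, and an interchange of the $\ell^2$-sum with the $\tau$-integral (your ``triangle inequality for the Bochner integral'' is exactly the Minkowski step the paper applies to its partial sums $S_j(t)$), with \eqref{SR} and \eqref{R} reduced to \eqref{A} via the lower bound $\lambda_k\geq\lambda_1>0$, i.e.\ the inclusions $D(A^{\epsilon})\subset D(A^{\epsilon-1/2})\subset D(A^{\epsilon-1})$. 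Your added verification that Corollary \ref{cor} applies uniformly to the complex arguments $z_k$ (negative real part, so $|\arg z_k|>\pi/2>\beta$) is a welcome detail the paper leaves implicit, but it does not change the route.
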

\begin{proof}Let
    \[
        S_j(t)= \sum\limits_{k=1}^j
        \left[\int\limits_{0}^t\eta^{\rho-1}E_{\rho,\rho}(-(\alpha-\sqrt{\alpha^{2}-\lambda_{k}})
        \eta^{\rho})g_k(t-\eta)d\eta\right] \frac{\lambda_{k}}{\sqrt{\alpha^{2}-\lambda_{k}}}  v_k.
    \]
 We may write
    $$
    ||S_j(t)||^2=\sum\limits_{k=1}^j \bigg|\frac{\lambda_{k}}{\sqrt{\alpha^{2}-\lambda_{k}}}\bigg|^{2}
    \left|\int\limits_{0}^t\eta^{\rho-1}E_{\rho,\rho}(-(\alpha-\sqrt{\alpha^{2}-\lambda_{k}})
    \eta^{\rho})g_k(t-\eta)d\eta\right|^2 .
    $$
Apply estimate (\ref{Large}) to large enough $\exists j_{0}$, $\forall k>j_{0}$  to obtain
 \[
    ||S_j(t)||^2\leq C\sum\limits_{k=j_{0}}^j \bigg[ \int\limits_0^t
    \eta^{2\varepsilon\rho-1}\frac{\lambda_{k}}{\sqrt{\lambda_{k}}}\lambda_k^{\varepsilon-\frac{1}{2}}|g_k(t-\eta)|
    d\eta\bigg]^2.
    \]
Minkowski's inequality implies
    \[
    ||S_j(t)||^2\leq
    C\bigg[\int\limits_0^t\eta^{2\varepsilon\rho-1}\bigg(\sum\limits_{k=j_{0}}^j
    \lambda_k^{2\varepsilon}|g_k(t-\eta)|^2\bigg)^{\frac{1}{2}}
    d\eta\bigg]^2\leq C
    \max\limits_{0\leq t \leq T}||g(t)||^2_{\varepsilon}.
    \]
    Since
     \[
    \int\limits_0^t(t-\tau)^{\rho-1} AR^{-1}E_{\rho, \rho}(-(t-\tau)^\rho S^{-})g(\tau) d\tau =\lim_{j\to\infty}S_{j}(t),
    \]
     this implies the assertion of the (\ref{A}). (\ref{SR}) and (\ref{R}) are obtained in the same way as in the proof of (\ref{A}) combining the fact that $D(A^{\varepsilon}) \subset D(A^{\varepsilon-1/2})\subset D(A^{\varepsilon-1}) $.
 \end{proof}
\begin{lem}\label{Int} Let $\alpha>0$ and  $g(t)\in C(H)$. Then
  \begin{equation}\label{J}
  \bigg |\bigg|J^{-\rho}_{t}\bigg(\int_{0}^{t}(t-\tau)^{2\rho-1}E^{2}_{\rho,2\rho}(-\alpha(t-\tau)^{\rho})g(\tau)d\tau\bigg)\bigg|\bigg|_{C(H)}\leq \frac{M}{\Gamma(\rho)}\frac{T^{3\rho}}{2\rho^{3}}(2+\rho)||g(t)||_{C(H)}.
\end{equation}
\end{lem}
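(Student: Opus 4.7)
The plan is to split the estimate into two layers: first control the inner Prabhakar convolution $F(t):=\int_{0}^{t}(t-\tau)^{2\rho-1}E^{2}_{\rho,2\rho}(-\alpha(t-\tau)^{\rho})g(\tau)d\tau$ in the $H$-norm, and then apply the definition of the fractional integral $J_t^{-\rho}$ of order $\rho$ (formula (\ref{def0}) with $\sigma=-\rho$) to $F$.

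The key algebraic step is to eliminate the Prabhakar function in favor of classical Mittag-Leffler functions, so that Corollary \ref{cor} applies. I would use identity (\ref{Prahma}) with $\mu=2\rho$ to write
\[
E^{2}_{\rho,2\rho}(z)=\frac{1}{\rho}\bigl[E_{\rho,2\rho-1}(z)+(1+\rho)E_{\rho,2\rho}(z)\bigr],
\]
and then for $z=-\alpha(t-\tau)^{\rho}\le 0$ invoke Corollary \ref{cor} (which gives $|E_{\rho,\mu}(z)|\le M$ uniformly on the nonpositive axis) to conclude $|E^{2}_{\rho,2\rho}(-\alpha(t-\tau)^{\rho})|\le \frac{M(2+\rho)}{\rho}$. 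Inserting this pointwise bound and pulling $\|g(\tau)\|\le\|g\|_{C(H)}$ out of the inner integral yields
\[
\|F(t)\|\le \frac{M(2+\rho)}{\rho}\,\|g\|_{C(H)}\int_{0}^{t}(t-\tau)^{2\rho-1}d\tau=\frac{M(2+\rho)}{2\rho^{2}}\,t^{2\rho}\,\|g\|_{C(H)}.
\]

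For the outer layer, by definition $J_t^{-\rho}F(t)=\frac{1}{\Gamma(\rho)}\int_{0}^{t}(t-\xi)^{\rho-1}F(\xi)d\xi$. Using the previous bound and the crude majorization $\xi^{2\rho}\le T^{2\rho}$ for $\xi\in[0,T]$, one obtains
\[
\|J_t^{-\rho}F(t)\|\le \frac{M(2+\rho)}{2\Gamma(\rho)\rho^{2}}\,\|g\|_{C(H)}\,T^{2\rho}\int_{0}^{t}(t-\xi)^{\rho-1}d\xi\le \frac{M(2+\rho)T^{3\rho}}{2\Gamma(\rho)\rho^{3}}\,\|g\|_{C(H)},
\]
which is exactly (\ref{J}) after taking the maximum over $t\in[0,T]$.

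I do not anticipate a real obstacle here: everything reduces to the boundedness of $E_{\rho,\mu}$ on the negative real axis together with two elementary Beta-type integrals. The only nontrivial point is recognizing that the Prabhakar function $E^{2}_{\rho,2\rho}$ can be bypassed via the identity (\ref{Prahma}), since no direct uniform estimate for $E^{\gamma}_{\rho,\mu}$ has been quoted in the preliminaries; once that reduction is performed, the estimate drops out by elementary majorization rather than the delicate spectral-sum arguments used in Lemmas \ref{Estimate1}--\ref{aep}.
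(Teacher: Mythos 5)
Your proposal is correct and follows essentially the same route as the paper: both reduce the Prabhakar kernel via identity (\ref{Prahma}) to the classical functions $E_{\rho,2\rho-1}$ and $E_{\rho,2\rho}$, bound these by $M$ through Corollary \ref{cor}, and then evaluate the two elementary integrals $\int_0^t(t-\tau)^{2\rho-1}d\tau$ and $\int_0^t(t-\xi)^{\rho-1}d\xi$ to arrive at the constant $\frac{M}{\Gamma(\rho)}\frac{T^{3\rho}}{2\rho^{3}}(2+\rho)$. The only cosmetic difference is that you bound the Prabhakar kernel pointwise before integrating while the paper keeps the two terms separate and invokes estimate (\ref{ES}); the computations are identical.
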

\begin{proof} For convenience, let's denote the argument of $J_t^{-\rho}$ by
\[
F(t)=\int_{0}^{t}(t-\tau)^{2\rho-1}E^{2}_{\rho,2\rho}(-\alpha(t-\tau)^{\rho})g(\tau)d\tau.
\]
According to \eqref{Prahma}
\[
F(t)=\frac{1}{\rho}\int_{0}^{t}(t-\tau)^{2\rho-1}E_{\rho,2\rho-1}(-\alpha(t-\tau)^{\rho})g(\tau)d\tau+\frac{1+\rho}{\rho}\int_{0}^{t}(t-\tau)^{2\rho-1}E_{\rho,2\rho}(-\alpha(t-\tau)^{\rho})g(\tau)d\tau,
\]
then
\[
\max_{0\leq t\leq T}||J^{-\rho}_{t}F(t)||=\max_{0\leq t\leq T}||\int_{0}^{t}F(\tau)(t-\tau)^{\rho-1}d\tau||\leq\max_{0\leq t\leq T}\int_{0}^{t}||F(\tau)|||t-\tau|^{\rho-1}d\tau
\]
\[
\leq ||F(t)||_{C(H)}\max_{0\leq t\leq T}\int_{0}^{t}|t-\tau|^{\rho-1}d\tau=||F(t)||_{C(H)}\max_{0\leq t\leq T}\frac{t^{\rho}}{\rho}\leq\frac{T^{\rho}}{\rho}||F(t)||_{C(H)},
\]
Thus we need to estimate $\lVert F(t)\rVert_{C(H)}$ and it can be done as follows
\[
  ||F(t)||_{C(H)}\leq\frac{1}{\rho}\big|\big|\int_{0}^{t}(t-\tau)^{2\rho-1}E_{\rho,2\rho-1}(-\alpha(t-\tau)^{\rho})g(\tau)d\tau\big|\big|_{C(H)}
\]
\[
+\frac{1+\rho}{\rho}\big|\big|\int_{0}^{t}(t-\tau)^{2\rho-1}E_{\rho,2\rho}(-\alpha(t-\tau)^{\rho})g(\tau)d\tau\big|\big|_{C(H)}
\]
\[
\leq\frac{1}{\rho}\big|\big|E_{\rho,2\rho-1}(-\alpha(t-\tau)^{\rho})g(t)\big|\big|_{C(H)}\max_{0\leq t \leq T}\int_{0}^{t}|(t-\tau)|^{2\rho-1}d\tau
\]
\[
+\frac{1+\rho}{\rho}\big|\big|E_{\rho,2\rho}(-\alpha(t-\tau)^{\rho})g(t)\big|\big|_{C(H)}\max_{0\leq t \leq T}\int_{0}^{t}|(t-\tau)|^{2\rho}d\tau.
\]
Using estimate (\ref{ES})
\begin{equation}\label{Ft}
||F(t)||_{C(H)}\leq \frac{MT^{2\rho}}{2\rho^{2}}(2+\rho)||g(t)||_{C(H)}.
\end{equation}
\end{proof}

\section {\textbf{Proof of the theorem on the forward problem}}

In this section, we prove Theorem \ref{main1}.

\begin{proof}
In accordance with the Fourier method, we will seek the solution of this problem in the form
\begin{equation}\label{fure}
u(t) =\sum\limits_{k=1}^\infty T_k(t) v_k,
\end{equation}
where $T_k(t)$ is a solution of the problem
   \begin{equation}\label{Cauchy1}
\begin{cases}
  & (D_{t}^{\rho })^{2}T_{k}(t)+2\alpha D_{t}^{\rho }T_{k}(t)+\lambda_{k} T_{k}(t)=f_{k}(t), \\
 & \underset{t\to 0}{\mathop{\lim }}\,D_{t}^{\rho }T_{k}(t)={{\varphi }_{0k}},\\
 & T_{k}(0)={{\varphi }_{1k}}, \\
\end{cases}
\end{equation}
Apply Lemma \ref{eq2} to get
\begin{equation}\label{2}T_{k}(t)=
\begin{cases}
  & y_{1k}(t),\quad \alpha^{2}\neq \lambda_{k}; \\
 & y_{2}(t),\quad \alpha^{2}=\lambda_{k}.
\end{cases}
\end{equation}
Therefore we have two cases:\\
Case I:  \quad $\alpha^2\neq \lambda_k$ for all $k\in\mathbb{N}$ \\
\begin{equation}
u(t)=\frac{1}{2}\bigg[E_{\rho,1}(-S^{-}t^{\rho})+E_{\rho,1}(-S^{+}t^{\rho})\bigg]\varphi_{1}+\frac{\alpha}{2}\bigg[R^{-1}E_{\rho,1}(-S^{-}t^{\rho})-R^{-1}E_{\rho,1}(-S^{+}t^{\rho})\bigg]\varphi_{1}
\end{equation}
$$
+\frac{1}{2}\bigg[R^{-1}E_{\rho,1}(-S^{-}t^{\rho})-R^{-1}E_{\rho,1}(-S^{+}t^{\rho})\bigg]\varphi_{0}
$$

$$
+\frac{1}{2}\int_{0}^{t}(t-\tau)^{\rho-1}\bigg[R^{-1}E_{\rho,\rho}(-S^{-}(t-\tau)^{\rho})-R^{-1}E_{\rho,\rho}(-S^{+}(t-\tau)^{\rho})\bigg]f(\tau)d\tau.
$$
Case II: \quad $\exists k_0\in \mathbb{N}$ such that $\alpha^2=\lambda_{k_0}$.

For simplicity we assume that there is only one $\lambda_{k_0}$ of this kind. Then the solution is
\begin{equation}\label{f}
u(t)=\frac{1}{2}\bigg[\tilde{E}_{\rho,1}(-S^{-}t^{\rho})+\tilde{E}_{\rho,1}(-S^{+}t^{\rho})\bigg]\varphi_{1}+\frac{\alpha}{2}\bigg[R^{-1}\tilde {E}_{\rho,1}(-S^{-}t^{\rho})-R^{-1}\tilde{E}_{\rho,1}(-S^{+}t^{\rho})\bigg]\varphi_{1}
\end{equation}
$$
+\frac{1}{2}\bigg[R^{-1}\tilde{E}_{\rho,1}(-S^{-}t^{\rho})-R^{-1}\tilde{E}_{\rho,1}(-S^{+}t^{\rho})\bigg]\varphi_{0}+E_{\rho,1}(-\alpha t^{\rho})\varphi_{1k_0}v_{k_0}+\alpha t^{\rho} E^{2}_{\rho,1+\rho}(-\alpha t^{\rho})\varphi_{1k_0}v_{k_0}
$$
$$
+t^{\rho}E^{2}_{\rho,1+\rho}(-\alpha t^{\rho})\varphi_{0k}v_{k_0}+\int_{0}^{t}(t-\tau)^{2\rho-1}E^{2}_{\rho,2\rho}(-\alpha(t-\tau)^{\rho})f_{k_0}(\tau)v_{k_0}d\tau
$$
$$
+\frac{1}{2}\int_{0}^{t}(t-\tau)^{\rho-1}\bigg[R^{-1}\tilde{E}_{\rho,\rho}(-S^{-}(t-\tau)^{\rho})-R^{-1}\tilde{E}_{\rho,\rho}(-S^{+}(t-\tau)^{\rho})\bigg]f(\tau)d\tau,
$$
where we denote by $$\tilde{E}_{\rho,\mu}(-St^\rho)g=\sum\limits_{k\neq k_0} E_{\rho,\mu} (-(\alpha\pm\sqrt{\alpha^{2}-\lambda_{k}})t^{\rho}) g_k v_k.$$

In the case where there are several indices $k\in\mathbb{N}$ such that $\alpha^2=\lambda_{k}$, we can repeat the same argument with a slight modification in finite number of terms.

We claim that $u(t)$ is a solution of the problem (\ref{prob1}) in both above cases according to the definition \ref{def1}.

Since most of the terms of $u(t)$ is the same in both cases, it is sufficient to study the second case. All the estimates we use for showing the second case can be adjusted to show the first case.

According to (\ref{Prahma}), we write (\ref{f}) as follows:
\begin{equation}\label{1f}
u(t)=\frac{1}{2}\bigg[\tilde{E}_{\rho,1}(-S^{-}t^{\rho})+\tilde{E}_{\rho,1}(-S^{+}t^{\rho})\bigg]\varphi_{1}+\frac{\alpha}{2}\bigg[R^{-1}\tilde {E}_{\rho,1}(-S^{-}t^{\rho})-R^{-1}\tilde{E}_{\rho,1}(-S^{+}t^{\rho})\bigg]\varphi_{1}
\end{equation}
$$
+\frac{1}{2}\bigg[R^{-1}\tilde{E}_{\rho,1}(-S^{-}t^{\rho})-R^{-1}\tilde{E}_{\rho,1}(-S^{+}t^{\rho})\bigg]\varphi_{0}+E_{\rho,1}(-\alpha t^{\rho})\varphi_{1k_0}v_{k_0}+\frac{\alpha t^{\rho}}{\rho} E_{\rho,\rho}(-\alpha t^{\rho})\varphi_{1k_0}v_{k_0}
$$
$$
+\frac{2\alpha t^{\rho}}{\rho}E_{\rho,1+\rho}(-\alpha t^{\rho})\varphi_{1k_0} v_{k_0}+\frac{t^{\rho}}{\rho} E_{\rho,\rho}(-\alpha t^{\rho})\varphi_{0k_0}v_{k_0}+\frac{2 t^{\rho}}{\rho}E_{\rho,1+\rho}(-\alpha t^{\rho})\varphi_{0k_0} v_{k_0}
$$
$$
+\frac{1}{\rho}\int_{0}^{t}(t-\tau)^{2\rho-1}E_{\rho,2\rho-1}(-\alpha(t-\tau)^{\rho})f_{k_0}(\tau)v_{k_0}d\tau+\frac{1+\rho}{\rho}\int_{0}^{t}(t-\tau)^{2\rho-1}E_{\rho,2\rho}(-\alpha(t-\tau)^{\rho})f_{k_0}(\tau)v_{k_0}d\tau
$$
$$
+\frac{1}{2}\int_{0}^{t}(t-\tau)^{\rho-1}\bigg[R^{-1}\tilde{E}_{\rho,\rho}(-S^{-}(t-\tau)^{\rho})-R^{-1}\tilde{E}_{\rho,\rho}(-S^{+}(t-\tau)^{\rho})\bigg]f(\tau)d\tau.
$$

 Estimate for $\lVert u(t)\rVert_{C(H)}$ using (\ref{ES}),(\ref{RES}), Corollary \ref{cor}, (\ref{R})
$$
||u(t)||_{C(H)}\leq (M+\alpha C_{4})||\varphi_{1}||+C_{4}||\varphi_{0}||+\left(M+\frac{3\alpha M T^{\rho}}{\rho}\right)|\varphi_{1k_{0}}|+\frac{3MT^{\rho}}{\rho}|\varphi_{0k_{0}}|
$$
$$
+\frac{MT^{2\rho}}{\rho^{2}}(2+\rho)\max_{0\leq t \leq T}|f_{k_{0}}(t)|+C\max_{0\leq t \leq T}||f(t)||_{\epsilon}.
$$
It should be shown that this series converges after applying operator $A$ and the derivatives $(D^{\rho})^{2}_{t}$, $D^{\rho }_{ t}$.

Let us estimate $Au(t)$. If $S_{j}(t)$ is a partial sum of (\ref{1f}), then
$$
AS_{j}(t)=\frac{1}{2}\sum\limits_{\underset{k\ne {{k}_{0}}}{\mathop{k=1}}\,}^{j}\bigg[E_{\rho,1}(-(\alpha-\sqrt{\alpha^{2}-\lambda_{k}})t^{\rho})\varphi_{1k}+E_{\rho,1}(-(\alpha+\sqrt{\alpha^{2}-\lambda_{k}})t^{\rho})\varphi_{1k}
$$
$$
+\frac{\alpha}{\sqrt{\alpha^{2}-\lambda_{k}}}E_{\rho,1}(-(\alpha-\sqrt{\alpha^{2}-\lambda_{k}})t^{\rho})\varphi_{1k}-\frac{\alpha}{\sqrt{\alpha^{2}-\lambda_{k}}}E_{\rho,1}(-(\alpha+\sqrt{\alpha^{2}-\lambda_{k}})t^{\rho})\varphi_{1k}
$$
$$
+\frac{1}{\sqrt{\alpha^{2}-\lambda_{k}}}E_{\rho,1}(-(\alpha-\sqrt{\alpha^{2}-\lambda_{k}})t^{\rho})\varphi_{0k}-\frac{1}{\sqrt{\alpha^{2}-\lambda_{k}}}E_{\rho,1}(-(\alpha+\sqrt{\alpha^{2}-\lambda_{k}})t^{\rho})\varphi_{0k}
$$
$$
+\frac{1}{\sqrt{\alpha^{2}-\lambda_{k}}}\int_{0}^{t}(t-\tau)^{\rho-1}E_{\rho,\rho}(-(\alpha-\sqrt{\alpha^{2}-\lambda_{k}})(t-\tau)^{\rho})f_{k}(\tau)d\tau
$$
$$
-\frac{1}{\sqrt{\alpha^{2}-\lambda_{k}}}\int_{0}^{t}(t-\tau)^{\rho-1}E_{\rho,\rho}(-(\alpha+\sqrt{\alpha^{2}-\lambda_{k}})(t-\tau)^{\rho})f_{k}(\tau)d\tau\bigg]\lambda_{k}v_{k}
$$
$$
+E_{\rho,1}(-\alpha t^{\rho})\varphi_{1k_0}\lambda_{k_{0}}v_{k_0}+\frac{\alpha t^{\rho}}{\rho} E_{\rho,\rho}(-\alpha t^{\rho})\varphi_{1k_0}\lambda_{k_{0}}v_{k_0}
$$
$$
+\frac{2\alpha t^{\rho}}{\rho}E_{\rho,1+\rho}(-\alpha t^{\rho})\varphi_{1k_0} \lambda_{k_{0}}v_{k_0}+\frac{t^{\rho}}{\rho} E_{\rho,\rho}(-\alpha t^{\rho})\varphi_{0k_0}\lambda_{k_{0}}v_{k_0}+\frac{2 t^{\rho}}{\rho}E_{\rho,1+\rho}(-\alpha t^{\rho})\varphi_{0k_0}\lambda_{k_{0}} v_{k_0}
$$
$$
+\frac{1}{\rho}\int_{0}^{t}(t-\tau)^{2\rho-1}E_{\rho,2\rho-1}(-\alpha(t-\tau)^{\rho})f_{k_0}(\tau)\lambda_{k_{0}}v_{k_0}d\tau+\frac{1+\rho}{\rho}\int_{0}^{t}(t-\tau)^{2\rho-1}E_{\rho,2\rho}(-\alpha(t-\tau)^{\rho})f_{k_0}(\tau)\lambda_{k_{0}}v_{k_0}d\tau
$$

Using estimates (\ref{AES}), (\ref{ARES}), Corollary \ref{cor} and (\ref{A})  consequently for above given expression we get
\begin{equation*}
    \begin{split}
        ||AS_{j}(t)||\leq & C_3t^{-\rho}||\varphi_{1}||_{\frac{1}{2}}+\alpha  C_{6}t^{-\rho}||\varphi_{1}||+C_{6}t^{-\rho}||\varphi_{0}||+\alpha^{2}(M+\frac{3\alpha M T^{\rho}}{\rho})|\varphi_{1k_{0}}|\\
&+\frac{3\alpha^{2}MT^{\rho}}{\rho}|\varphi_{0k_{0}}|+\frac{\alpha^{2}MT^{2\rho}}{\rho^{2}}(2+\rho)\max_{0\leq t \leq T}|f_{k_{0}}(t)|+C\max_{0\leq t \leq T}||f(t)||_{\epsilon}, \quad t>0.
    \end{split}
\end{equation*}

Hence, it is sufficient to have $\varphi_{0}\in H$, $\varphi_{1}\in D(A^{\frac{1}{2}})$  and $f(t)\in C([0,T];D(A^{\epsilon}))$ for having $Au(t)\in C((0,T];H)$.

Let us now estimate $D^{\rho}_{t}u(t)$. If $S_{j}(t)$ is a partial sum of (\ref{1f}), then by (\ref{Dervitavi}), (\ref{Dervitavi1}) and (\ref{Integro}) we see that
$$
D^{\rho}_{t}S_{j}(t)=\frac{1}{2}\sum\limits_{\underset{k\ne {{k}_{0}}}{\mathop{k=1}}\,}^{j}\bigg[-(\alpha-\sqrt{\alpha^{2}-\lambda_{k}})E_{\rho,1}(-(\alpha-\sqrt{\alpha^{2}-\lambda_{k}})t^{\rho})\varphi_{1k}
$$
$$
-(\alpha+\sqrt{\alpha^{2}-\lambda_{k}})E_{\rho,1}(-(\alpha+\sqrt{\alpha^{2}-\lambda_{k}})t^{\rho})\varphi_{1k}
$$
$$
-\frac{\alpha(\alpha-\sqrt{\alpha^{2}-\lambda_{k}})}{\sqrt{\alpha^{2}-\lambda_{k}}}E_{\rho,1}(-(\alpha-\sqrt{\alpha^{2}-\lambda_{k}})t^{\rho})\varphi_{1k}+\frac{\alpha(\alpha+\sqrt{\alpha^{2}-\lambda_{k}})}{\sqrt{\alpha^{2}-\lambda_{k}}}E_{\rho,1}(-(\alpha+\sqrt{\alpha^{2}-\lambda_{k}})t^{\rho})\varphi_{1k}
$$
$$
-\frac{\alpha-\sqrt{\alpha^{2}-\lambda_{k}}}{\sqrt{\alpha^{2}-\lambda_{k}}}E_{\rho,1}(-(\alpha-\sqrt{\alpha^{2}-\lambda_{k}})t^{\rho})\varphi_{0k}+\frac{\alpha+\sqrt{\alpha^{2}-\lambda_{k}}}{\sqrt{\alpha^{2}-\lambda_{k}}}E_{\rho,1}(-(\alpha+\sqrt{\alpha^{2}-\lambda_{k}})t^{\rho})\varphi_{0k}
$$

$$
-\frac{\alpha-\sqrt{\alpha^{2}-\lambda_{k}}}{\sqrt{\alpha^{2}-\lambda_{k}}}\int_{0}^{t}(t-\tau)^{\rho-1}E_{\rho,\rho}(-(\alpha-\sqrt{\alpha^{2}-\lambda_{k}})(t-\tau)^{\rho})f_{k}(\tau)d\tau
$$
$$
+\frac{\alpha+\sqrt{\alpha^{2}-\lambda_{k}}}{\sqrt{\alpha^{2}-\lambda_{k}}}\int_{0}^{t}(t-\tau)^{\rho-1}E_{\rho,\rho}(-(\alpha+\sqrt{\alpha^{2}-\lambda_{k}})(t-\tau)^{\rho})f_{k}(\tau)d\tau\bigg]v_{k}
$$
$$
-\alpha E_{\rho,1}(-\alpha t^{\rho})\varphi_{1k_{0}}v_{k_{0}}-\frac{\alpha^{2}t^{\rho}}{\rho}E_{\rho,\rho}(-\alpha t^{\rho})\varphi_{1k_{0}}v_{k_{0}}-\frac{2\alpha^{2}t^{\rho}}{\rho}E_{\rho,\rho+1}(-\alpha t^{\rho})\varphi_{1k_{0}}v_{k_{0}}
$$
$$
-\frac{t^{\rho}\alpha}{\rho}E_{\rho,\rho}(-\alpha t^{\rho})\varphi_{0k_{0}}v_{k_{0}}-\frac{2\alpha t^{\rho}}{\rho}E_{\rho,\rho+1}(-\alpha t^{\rho})\varphi_{0k_{0}}v_{k_{0}}-2\alpha\int_{0}^{t}(t-\tau)^{2\rho-1}E^{2}_{\rho,2\rho}(-\alpha(t-\tau)^{\rho})f_{k_{0}}(\tau)v_{k_{0}}d\tau
$$
$$
-\alpha^{2}J^{-\rho}_{t}\bigg(\int_{0}^{t}(t-\tau)^{2\rho-1}E^{2}_{\rho,2\rho}(-\alpha(t-\tau)^{\rho})f_{k_{0}}(\tau)v_{k_{0}}d\tau\bigg)+J^{-\rho}_{t}f_{k_{0}}(t)v_{k_{0}}
$$

Applying the estimates (\ref{SES}), (\ref{SRES}), Corollary \ref{cor} and (\ref{SR}), (\ref{J}) for corresponding terms of above expression we have
$$
||D^{\rho}_{t}S_{j}(t)||\leq C_{5}||\varphi_{0}||+(C_{1}t^{-\rho}+\alpha C_{5})||\varphi_{1}||+\frac{3M\alpha^{2}T^{\rho}}{\rho}|\varphi_{1k_{0}}|+\frac{3M\alpha T^{\rho}}{\rho}|\varphi_{0k_{0}}|
$$
$$
+\frac{2M\alpha^{2} T^{2\rho}}{2\rho^{2}}(2+\rho)\max_{0\leq t\leq T}|f_{k_{0}}|+\frac{M T^{3\rho}(2+\rho)}{\Gamma(\rho)2\rho^{3}}\max_{0\leq t\leq T}|f_{k_{0}}|+\frac{TM^{\rho}}{\rho}\max_{0\leq t\leq T}|f_{k_{0}}|+C\max_{0\leq t \leq T}||f(t)||_{\epsilon}, \quad t>0.
$$

If $\varphi_{1},\varphi_{0}\in H$ and $f(t)\in C([0,T];D(A^{\epsilon}))$, then we have $D^{\rho}_{t}u(t)\in C((0,T];H))$.

Further, equation (\ref{prob1}) implies $(D^{\rho}_{t})^{2}u(t)=-2\alpha D^{\rho}_{t}u(t)-Au(t)+f(t)$.Therefore, arguing as above
we find that $(D^{\rho}_{t})^{2}u(t) \in C((0,T];H) $.

Using estimates (\ref{SES1}) and similar ideas as above estimate we have
$$
||D^{\rho}_{t}S_{j}(t)||\leq C_{5}||\varphi_{0}||+C_{2}||\varphi_{1}||_{\frac{1}{2}}+\alpha C_{5}||\varphi_{1}||+\frac{3M\alpha^{2}T^{\rho}}{\rho}|\varphi_{1k_{0}}|+\frac{3M\alpha T^{\rho}}{\rho}|\varphi_{0k_{0}}|
$$
$$
+\frac{2M\alpha^{2} T^{2\rho}}{2\rho^{2}}(2+\rho)\max_{0\leq t\leq T}|f_{k_{0}}|+\frac{M T^{3\rho}(2+\rho)}{\Gamma(\rho)2\rho^{3}}\max_{0\leq t\leq T}|f_{k_{0}}|+\frac{T^{\rho}}{\rho}\max_{0\leq t\leq T}|f_{k_{0}}|+C\max_{0\leq t \leq T}||f(t)||_{\epsilon}.
$$
If $\varphi_{1}\in D(A^{\frac{1}{2}})$,$\varphi_{0}\in H$ and $f(t)\in C([0,T];D(A^{\epsilon}))$, then we have $D^{\rho}_{t}u(t)\in C(H)$.

Let us prove the uniqueness of the solution. We use a standard technique based on the
completeness of the set of eigenfunctions $\{v_k\}$ in $H$.

Let $u(t)$ be a solution to the problem
\begin{equation}\label{Unique}
\begin{cases}
  & (D_{t}^{\rho })^{2}u(t)+2\alpha D_{t}^{\rho }u(t)+Au(t)=0, \quad  0<t\le T; \\
 & \underset{t\to 0}{\mathop{\lim }}\,D_{t}^{\rho }u(t)=0,\\
 & u(0)=0. \\
\end{cases}
\end{equation}
Set $u_{k}(t) = (u(t), v_{k})$. Then, by virtue of equation (\ref{Unique}) and the
selfadjointness of operator $A$,
\begin{equation}\label{Uniq1}\nonumber
(D^{\rho}_{t})^{2}u_{k}(t)=(\big(D^{\rho}_{t})^{2}u(t),v_{k}\big)=(-2\alpha D^{\rho}_{t}u(t)-Au(t),v_{k})=
\end{equation}
$$
=(-2\alpha D^{\rho}_{t}u(t),v_{k})-(Au(t),v_{k})=-2\alpha (D^{\rho}_{t}u(t),v_{k})-(u(t),Av_{k})=
$$
$$
=-2\alpha D^{\rho}_{t}u_{k}(t)-\lambda_{k}u_{k}(t).
$$
Hence, we have the following problem for $u_k(t)$:
\begin{equation}\label{Unique2}\nonumber
\begin{cases}
  & (D_{t}^{\rho })^{2}u_{k}(t)+2\alpha D_{t}^{\rho }u_{k}(t)+\lambda_{k}u(t)=0,\quad 0<t\le T; \\
 & \underset{t\to 0}{\mathop{\lim }}\,D_{t}^{\rho }u_{k}(t)=0,\\
 & u_{k}(0)=0. \\
\end{cases}
\end{equation}
Lemma \ref{eq2} implies that $u_k(t)\equiv 0$ for all $k$ . Consequently, due to
the completeness of the system of eigenfunctions $\{v_k\}$, we have $u(t)\equiv0$, as required.
\end{proof}

\section{Acknowledgement}
The authors are grateful to A. O. Ashyralyev for posing the
problem and they convey thanks to Sh. A. Alimov for discussions of
these results.
The authors acknowledge financial support from the  Ministry of Innovative Development of the Republic of Uzbekistan, Grant No F-FA-2021-424.

\end{document}